\def\Bbb{\mathbb}
\def\Cal{\mathcal}
\def\Dt{\partial_t}
\def\eb{\varepsilon}
\def\R {\mathbb{R}}
\def\N {\mathbb{N}}
\def\M {{\mathcal M}}
\def \pt {\partial_t}
\def\<{\left<}
\def\>{\right>}
\def\({\left(}
\def\){\right)}
\newtheorem{proposition}{Proposition}[section]
\newtheorem{theorem}[proposition]{Theorem}
\newtheorem{corollary}[proposition]{Corollary}
\newtheorem{lemma}[proposition]{Lemma}
\theoremstyle{definition}
\newtheorem{definition}[proposition]{Definition}
\newtheorem{remark}[proposition]{Remark}
\numberwithin{equation}{section}
\def \au {\rm}
\def \ti {\it}
\def \jou {\rm}
\def \no#1#2#3 {{\bf #1} (#3), #2.}
\def \eds#1#2#3 {#1, #2, #3.}
\title[Inertial manifold for Cahn-Hilliard equation] {Inertial manifolds for the 3D Cahn-Hilliard equations with periodic boundary conditions }
\author[A.Kostianko and S.Zelik]{ Anna Kostianko${}^1$ and Sergey Zelik${}^1$}
\address{${}^1$
University of Surrey, Department of Mathematics,
Guildford, GU2 7XH, United Kingdom.}
\email{a.kostianko@surrey.ac.uk; s.zelik@surrey.ac.uk}
\subjclass[2000]{35B40, 35B45}
\keywords{Cahn-Hilliard equation, spatial averaging principle, inertial manifold}
\begin{document}
\begin{abstract} The existence of an inertial manifold for the 3D Cahn-Hilliard equation with periodic boundary conditions is verified using the proper extension of the so-called spatial averaging principle introduced by G. Sell and J. Mallet-Paret. Moreover, the extra regularity of this manifold is also obtained.
\end{abstract}
\thanks{This work is partially supported by  the grant RSF 14-41-00044 of RSF and the grant 14-01-00346 of RFBR}   
\maketitle
\tableofcontents
\section{Introduction}\label{s0}

It is believed that  the dynamics generated by dissipative PDEs in bounded domains is typically finite-dimensional. The latter means that despite the infinite-dimensionality of the initial phase space, the limit dynamics, say, on the so-called global attractor can be effectively described by finitely many parameters which satisfy a system of ODEs -- the so-called inertial form of the dissipative PDEs  considered, see \cite{BV,CV,MirZe,28,tem} and references therein. This reduction clearly works when the underlying PDE possesses an {\it inertial} manifold (IM) that is a finite-dimensional invariant $C^1$-smooth manifold with exponential tracking property. Then the desired inertial form can be constructed just by restricting the considered PDE to the invariant manifold, see \cite{FST,mik,rom-man}. However, the existence of an inertial manifold requires rather strong {\it spectral gap} assumptions which are usually satisfied only for the parabolic equations in the space dimension one and, despite a big permanent interest and many results obtained in this direction, the  finite-dimensional reduction for the case where the IM does not exist remains unclear and there are even some evidence that the dissipative dynamics may be {\it infinite-dimensional} in this case, see \cite{EKZ,sell-counter,Zel} and reference therein.
\par
It is also known that the above mentioned spectral gap assumptions are sharp and cannot be relaxed/removed at least on the level of the abstract functional models associated with the considered PDE, see \cite{EKZ,mik,rom-man}. However, an IM may exist for some concrete classes of PDEs even when the spectral gap condition is violated. The most famous example is a scalar reaction-diffusion equation
%$$
\begin{equation}\label{0.RDE}
\Dt u=\Delta_x u-f(u)
\end{equation}
%$$
on a 3D torus $x\in[-\pi,\pi]^3$. Here the spectral gap condition reads
%$$
\begin{equation}\label{0.gapRDE}
\lambda_{N+1}-\lambda_N>2L,
\end{equation}
%$$
where $\lambda_1\le\lambda_2\le\cdots$ are the eigenvalues of the minus Laplacian on a torus enumerated in the non-decreasing order and $L$ is a Lipschitz constant of the nonlinearity $f$. The eigenvalues of the Laplacian  are all natural numbers which can be presented as sums of 3 squares and by the Gauss theorem, there are no gaps of length more than 3 in the spectrum, so the spectral gap condition  clearly fails if the Lipschitz constant $L$ is large enough. Nevertheless, the corresponding IM can be constructed (for all values of the Lipschitz constant $L$) using the so-called {\it spatial averaging} principle introduced in \cite{mal-par}.
\par
One more example is the 1D reaction-diffusion-advection problem
%$$
\begin{equation}\label{0.RDEA}
\Dt u=\partial^2_x u+\partial_x f(u)-g(u),\ \ x\in[0,\pi],\ u\big|_{x=0}=u\big|_{u=\pi}=0,
\end{equation}
%$$
where the spectral gap condition is also not satisfied initially, but is satisfied after the proper change of the dependent variable $u$, see \cite{Zel1} and also \cite{rom-th,rom-th1} where the Lipschitz continuous inertial form is constructed.
\par
Although the spatial averaging principle has been used to get the IM for reaction-diffusion equations in some non-toroidal domains, see \cite{kwean}, to the best of our knowledge, it has been never applied before to the equations different from the scalar reaction-diffusion ones. The aim of the paper is to cover this gap by extending the method to the so-called Cahh-Hilliard equation on a 3D torus. To be more precise, we consider the following 4th order parabolic problem:
%$$
\begin{equation}\label{0.CH}
\Dt u+\Delta_x(\Delta_x u-f(u))=0,\ \ \partial_n u\big|_{\partial\Omega}=\partial_n\Delta_xu\big|_{\partial\Omega}=0,\ \ u\big|_{t=0}=u_0,
\end{equation}
%$$
where $\Omega$ is a bounded 3D domain and $f(u)$ is a given non-linear interaction function, see \cite{CH,Ell,No1} and the references therein concerning the physical background of this equation. We also assume that this function satisfies some standard dissipativity assumptions, so the associated semigroup possesses a global attractor $\mathcal A$ which is bounded in $H^2(\Omega)\subset C(\Omega)$, see e.g., \cite{CMZ,MirZe,tem} for more details. By this reason, without loss of generality, we may assume from the very beginning that the function $f:\R\to\R$ is {\it globally} bounded and is {\it globally} Lipschitz continuous with the Lipschitz constant $L$. It worth mentioning that this equation possesses a mass conservation law
%$$
\begin{equation}\label{0.int}
\frac d{dt}\<u(t)\>=0,\ \ \<u\>=\frac1{|\Omega|}\int_\Omega u(x)\,dx,
\end{equation}
%$$
so we assume from now on that $\<u(t)\>=\<u(0)\>=0$. Thus, the natural phase space of the problem is
%$$
\begin{equation}\label{0.phase}
 H^{-1}:=H^{-1}(\Omega)\cap\{\<u_0\>=0\}.
\end{equation}
Note that the spectral gap condition for the IM existence for equation \eqref{0.CH} reads
%$$
\begin{equation}\label{0.CHgap}
\frac{\lambda_{N+1}^2-\lambda_N^2}{(\lambda_{N}^2)^{1/2}+(\lambda_{N+1}^2)^{1/2}}=\lambda_{N+1}-\lambda_N>L.
\end{equation}
%$$
This condition is clearly satisfied for 1D domains only, in the 2D case it is still an open problem whether or not the spectral gaps of arbitrary width exist for any/generic domains $\Omega$ although it will be so for some special domains like 2D sphere or 2D torus. In these cases the construction of the IM is straightforward, see \cite{BM,tem} for more details. However, it is extremely unlikely that the spectral gap condition is satisfied for more or less general 3D domains (in a fact, we know the only example of a 3D sphere
 where it is true). In particular, it obviously fails for the case of a 3D torus $\Omega=\mathbb T^3=[-\pi,\pi]^3$ (endowed by periodic boundary conditions), therefore, the problem of finding the IM for the 3D Cahn-Hilliard equation with periodic boundary conditions becomes non-trivial and to the best of our knowledge, has been not considered before.

\par
The next theorem gives the main result of the paper.
\begin{theorem}\label{Th0.main} For infinitely many values of  $N\in\mathbb N$ there exists an $N$-dimensional IM $\mathcal M_N$ for the Cahn-Hilliard problem \eqref{0.CH} with periodic boundary conditions which is a graph of a Lipschitz continuous function over the $N$-dimensional space spanned by the first $N$ eigenvectors of the Laplacian. Moreover, this function is $C^{1+\eb}$-smooth for some small  $\eb=\eb(N)>0$ and the manifold
possesses the so-called exponential tracking property, see Section \ref{s2} for the details.
\end{theorem}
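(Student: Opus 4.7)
The plan is to adapt the spatial averaging principle of \cite{mal-par} to the fourth-order Cahn-Hilliard operator, where the extra Laplacian in front of the nonlinearity produces a loss of derivatives that must be absorbed inside the cone estimates.

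\textbf{Step 1 (Functional setting).} I would work in the mean-zero phase space $H^{-1}$ on the $3$-torus, and first exploit the $H^2$-regularity of the attractor $\A$ recalled in the introduction to replace $f$ by a modification whose derivatives up to sufficiently high order are globally bounded and Lipschitz, without changing the dynamics in a neighbourhood of $\A$. The problem is then recast as an abstract evolution
\[
\Dt u+Au=F(u),\qquad A:=\Delta_x^2,\quad F(u):=\Delta_x f(u),
\]
with $A$ self-adjoint and positive on the phase space and $F$ Lipschitz between the natural energy spaces.

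\textbf{Step 2 (Cone/squeezing framework).} I would seek $\M_N$ via the Perron/graph-transform scheme of \cite{mik,rom-man} as a Lipschitz graph over the span of the first $N$ eigenvectors of $-\Delta_x$. The central ingredient is a strong cone (Lyapunov) estimate for the equation of variations
\[
\Dt v+\Delta_x^2 v-\Delta_x\bigl(f'(u)v\bigr)=0.
\]
Decomposing $v=p+q$ into low and high modes, the dangerous coupling is $\Delta_x(f'(u)v)$ whose leading piece on a thin annular shell of modes acts as $-\lambda_N f'(u)\cdot$, so that the effective perturbation has $L^2$-operator norm of order $L\lambda_N$, while the gap of $A$ between the adjacent eigenvalues $\lambda_N^2$ and $\lambda_{N+1}^2$ is of order $\lambda_N(\lambda_{N+1}-\lambda_N)$. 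The bare spectral-gap condition \eqref{0.CHgap} thus reduces to $\lambda_{N+1}-\lambda_N>L$, which fails on $\mathbb{T}^3$ by Gauss's theorem.

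\textbf{Step 3 (Spatial averaging for $\Delta_x^2$ on $\mathbb{T}^3$).} This is the heart of the matter. For $N\in\N$ and $k_0\in\N$ let $\mathcal{R}_{N,k_0}:=\{k\in\mathbb{Z}^3:\lambda_N\le|k|^2<\lambda_N+k_0\}$ and let $\Pi_{N,k_0}$ denote the spectral projector onto these Fourier modes. Following the scheme of \cite{mal-par}, and relying on the number-theoretic fact that infinitely many sums of three squares are surrounded by many representable neighbours in $\mathbb{Z}^3$, I expect to prove the key averaging estimate: for every $\delta>0$ and every fixed $k_0$ there are infinitely many $N$ such that
\[
\big\|\Pi_{N,k_0}\bigl(f'(u)-\<f'(u)\>\bigr)\Pi_{N,k_0}\big\|_{L^2\to L^2}\le\delta
\]
uniformly for $u$ in a bounded set of $H^2$. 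On a thin annular shell $-\Delta_x$ reduces to the scalar $\lambda_N$, so the nonlinear coupling restricted to the shell becomes, up to an error of order $\delta\lambda_N$, the scalar $-\lambda_N\<f'(u)\>$ times the projector $\Pi_{N,k_0}$; since such a scalar shift of $A$ leaves its spectral gap intact, the squeezing estimate can be closed.

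\textbf{Step 4 (Manifold, tracking, smoothness).} With the averaged gap inequality at hand, the graph transform becomes a contraction in the natural cone $\{q:\|q\|\le\|p\|\}$ and produces, as in \cite{mik,rom-man}, the $N$-dimensional Lipschitz manifold $\M_N$ together with the exponential tracking property of Section~\ref{s2}. For the additional $C^{1+\eb}$-regularity of the graph map, I would differentiate the fixed-point equation and run the same cone argument for pairs of trajectories; the H\"older exponent $\eb=\eb(N)>0$ is extracted from the strict excess in the averaged spectral-gap inequality for the chosen $N$, in the same manner as in the Mallet-Paret--Sell construction.

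The main obstacle will be Step 3: one must show that the extra $\Delta_x$ in the Cahn-Hilliard nonlinearity does not spoil the averaging. This requires the annular shell to be thin enough that $-\Delta_x$ is essentially a constant multiple of the identity on it, yet thick enough to preserve the lattice density exploited by the averaging lemma, and it also forces one to control the lower-order commutator terms in the identity $\Delta_x(f'(u)v)=f'(u)\Delta_x v+2\nabla f'(u)\cdot\nabla v+\Delta_x f'(u)\cdot v$ using the $H^2$-regularity of $u$. Balancing these constraints, and verifying that $\<f'(u)\>$ depends Lipschitz-continuously on $u$ in the working phase space, is the delicate bookkeeping at the core of the proof.
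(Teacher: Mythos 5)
Your overall strategy is the same as the paper's: recast the equation abstractly, reduce the IM existence to a cone/squeezing estimate for the equation of variations, observe that the effective gap condition collapses to $\lambda_{N+1}-\lambda_N>L$, and then use the Mallet-Paret--Sell spatial averaging lemma on thin annular shells of the lattice $\mathbb Z^3$ with the averaged multiplier $\langle f'(u)\rangle$, followed by a perturbed-cone argument for the $C^{1+\eb}$ regularity. Two points of comparison are worth making. First, your functional setting $\Dt u+\Delta_x^2u=\Delta_x f(u)$ treats the nonlinearity as a map that loses two derivatives, which is why you end up worrying about the commutator $\Delta_x(f'(u)v)=f'(u)\Delta_x v+2\nabla f'(u)\cdot\nabla v+(\Delta_x f'(u))v$. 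The paper keeps the structure $\Dt u+A^2u+AF(u)=0$ with $A=-\Delta_x$ and $F(u)=f(u)-\langle f(u)\rangle$ bounded and Lipschitz from $H$ to $H$, and tests the variational equation against $A^{-1}(v_--v_+)$ (the $H^{-1}$ energy): the extra $A$ in front of $F'(u)v$ is then cancelled exactly, the term appearing in the cone identity is literally $(F'(u)w,w_--w_+)$, and no commutators arise anywhere. Moreover, since $P_{k,N}$, $Q_{k,N}$, $R_{k,N}$ commute with $A$, the shell reduction needs no "$\Delta_x\approx-\lambda_N$ on the shell" approximation: the whole verification reduces verbatim to the reaction-diffusion computation, so the "delicate bookkeeping" you flag at the end largely evaporates in the right formulation.

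The genuine gap is in your Step 1. Modifying $f$ so that its derivatives are globally bounded is not enough, because the spatial averaging estimate
\begin{equation*}
\|R_{k,N}\circ F'(u)\circ R_{k,N}v-\langle f'(u)\rangle R_{k,N}v\|_H\le\delta\|v\|_H
\end{equation*}
is only uniform over $u$ in a \emph{bounded set of} $H^{2-\kappa}$ (the bound comes from $\|f'(u)_{>r}\|_{L^\infty}\le Cr^{-(1-\theta)(2-\kappa)}Q(\|u\|_{H^{2-\kappa}})$), and the Perron/backward boundary-value construction of the manifold requires the cone inequality along \emph{all} trajectories with $P_Nu(0)$ ranging over all of $H_+$, whose low-mode part is not a priori bounded in $H^2$. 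The high modes $Q_Nu$ are controlled by the dissipative estimate \eqref{diss_Q_N}, but the low modes are not, so the strong cone condition cannot be verified globally for the original equation. The paper resolves this by adding the truncation term $-A^2P_Nu+\varphi(\|AP_Nu\|_H^2)A^2P_Nu$ to the equation (leaving it unchanged on the absorbing ball) and proving the cone inequality separately in the regime $\|AP_Nu\|_H\ge R_1$, where the strong linear damping of the low modes does the job via the elementary estimate $(T'(u)v,v)\le\frac12\lambda_N\|v\|_H^2+\frac12(Av,v)$. Without this cut-off (or an equivalent device) your graph transform is not a contraction on all of $H_+$ and the argument does not close; with it, your plan matches the paper's proof.
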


The paper is organized as follows.
\par
In Section \ref{s1}, we consider the functional model related with the problem considered and prepare some technical tools which will be used later.
\par
In Section \ref{s2}, for the reader convenience, we remind the invariant cone and squeezing property as well as give the proof of the IM existence theorem for our class of equations under the assumption that the cone and squeezing property are satisfied (following mainly  \cite{Zel}).
\par
In Section \ref{s3}, we reformulate the cone and squeezing property in a more convenient form of a single differential inequality and derive some kind of normal hyperbolicity (dominated splitting) estimates which are necessary to verify the smoothness of an IM.
\par
In Section \ref{s3.5}, we verify that the constructed manifold is $C^{1+\eb}$-smooth if the nonlinearity is smooth enough. This improves the result of \cite{mal-par} even on the level of reaction-diffusion equations where only $C^1$-smoothness has been verified.
\par
The abstract form of spatial averaging principle has been stated in Section \ref{s4} and the existence of the IM is verified under the assumption that this principle holds.
\par
Finally, in Section \ref{s5}, we verify this principle for the case of the Cahn-Hilliard equation on a 3D torus and finish the proof of the main Theorem \ref{Th0.main}.

\section{Preliminaries}\label{s1}

We consider the following equation:
%$$
\begin{equation}\label{main_eq}
\pt u + A^2 u + A F(u) = 0,\ \ u|_{t=0}=u_0,
\end{equation}
%$$
where $A: D(A) \to H$ is a linear self-adjoint positive operator with compact inverse, $D(A)$ is the domain of the operator $A$, and non-linearity $F: H \to H$ is a globally Lipschitz with Lipschitz constant $L$ and globally bounded, i.e.,
%$$
\begin{equation}\label{1.lip}
1.\ \ \|F(u)\|_H \le C, \ \ u\in H,  \quad \quad 2.\ \  \|F(u_1) - F(u_2)\| \le L \|u_1 - u_2\|, \ \ u_1, u_2 \in H.
\end{equation}
%$$
It is well known that problem \eqref{main_eq} is globally well-posed and generates a non-linear semigroup $S(t)$ in $H$.

From Hilbert-Schmidt theorem we conclude that the operator $A$ possesses the complete orthonormal system of eigenvectors $\{e_n\}_{n=1}^{\infty}$ in $H$ which corresponds to eigenvalues $\lambda_n $ numerated in the non-decreasing way:
%$$
\begin{equation}
A e_n = \lambda_n e_n,  \ \ \ 0<\lambda_1 \le \lambda_2 \le \lambda_3\le ...
\end{equation}
%$$
and due to the compactness of $A^{-1}$, we have $\lambda_n \to \infty$ as $n \to \infty$.

Thus, we may represent $u$ in the form:
%$$
\begin{equation}
u = \sum_{n=1}^{\infty}u_n e_n,  \ \ u_n = (u,e_n).
\end{equation}
%$$
Then, as usual, the normed spaces $H^s:= D(A^{s/2})$, $s\in \R_+$, is defined as follows
%$$
\begin{equation}
H^s=\bigg\{u\in H:\ \|u \|_{H^s}^2 = \sum_{n=1}^{\infty}\lambda_n^{s}u_n^2<\infty\bigg\}.
\end{equation}
%$$
For $s<0$ such defined space $H^s$ is not complete. Thus for negative $s$ we define $H^s$ as completion of $H$ with respect to corresponding norm $\|\cdot\|_{H^s}$.
Let us introduce the orthoprojector to the first $N$ Fourier modes:
%$$
\begin{equation}
P_N u:= \sum_{n=1}^N u_n e_n
\end{equation}
%$$
and denote by $Q_N = Id - P_N$, $H_+:= P_N H$ and $H_-:= Q_N H$. Obviously, the following estimates are valid:
%$$
\begin{equation}\label{est_for_H_+/-}
\begin{cases}
(Au,u) \le \lambda_N \|u\|^2_H,\  u \in H_+; \\
(Au,u) \ge \lambda_{N+1} \| u\|^2_H,\  u \in D(A^{1/2})\cap H_-.
\end{cases}
\end{equation}
%$$
Throughout the work we will use notations  $u_+ := P_N u$ and $u_- := Q_N u$ for given element $u \in H$.
\par
The next proposition collects the standard dissipativity and smoothing properties of the solution semigroup associated with equation \eqref{main_eq}, see \cite{hen,Zel,tem} for more details.
\begin{proposition}\label{Prop1.trivial}
Let the non-linearity $F$ and operator $A$ satisfy the above assumptions. Then, problem \eqref{main_eq} is uniquelly solvable for any $u_0\in H^{-1}$ and, therefore, the solution semigroup $S(t):H^{-1}\to H^{-1}$ is well-defined. Moreover,
the following properties hold for any solution $u(t)$ of problem \eqref{main_eq}:
\par
1. Dissipativity in $H^s$ for $s \in [-1,2]$:
%$$
\begin{equation}\label{diss}
\|u(t)\|_{H^s} \le C e^{-\gamma t}\|u(0)\|_{H^s} + R_*,
\end{equation}
%$$
where $C$, $\gamma$ and $R_*$ are some positive constants which are independent of the solution $u$ and $t$;

2. Smoothing property:
%$$
\begin{equation}\label{smooth}
\|u(t)\|_{H^2} \le C t^{-1} \|u(0)\|_H + R_0, \ \ t>0,
\end{equation}
%$$
where $C$ and $R_0$ are independent of $u$ and $t$
\par
3. Dissipativity of the $Q_N$ component:
%$$
\begin{equation}\label{diss_Q_N}
\|Q_N u(t)\|_{H^{2-\kappa}} \le C e^{-\gamma t}\|Q_N u(0)\|_{H^{2-\kappa}} + R_\kappa,
\end{equation}
%$$
for all $N \in \N$ and $\kappa \in (0, 3]$. Here $C$, $\gamma$ and $C_\kappa$ are independent of $N$, $u$ and $t$.
\end{proposition}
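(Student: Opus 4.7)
These are three standard parabolic a priori estimates for equation \eqref{main_eq}. The common ingredient is that the forcing $-AF(u)$ is uniformly bounded in $H^{-2}$, since $\|AF(u)\|_{H^{-2}} = \|F(u)\|_H \le C$ by the general formula $\|Av\|_{H^s} = \|v\|_{H^{s+2}}$. My plan is to establish \eqref{diss} at $s=-1$ first (which also gives global well-posedness), then use parabolic regularity of the analytic semigroup $e^{-A^2 t}$ to prove \eqref{smooth} and extend \eqref{diss} to the full range $s\in[-1,2]$, and finally to adapt the argument on $H_-$ to obtain \eqref{diss_Q_N} with constants independent of $N$.

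\textbf{Base dissipativity and well-posedness.} Taking the $H$-inner product of \eqref{main_eq} with $A^{-1}u$ yields
\[
\tfrac12\tfrac{d}{dt}\|u\|_{H^{-1}}^2 + \|u\|_{H^1}^2 + (F(u),u) = 0.
\]
Using $\|F(u)\|_H\le C$ and $\|u\|_H\le\lambda_1^{-1/2}\|u\|_{H^1}$, Young's inequality bounds the nonlinear term by $\tfrac12\|u\|_{H^1}^2+C'$, so that $\tfrac{d}{dt}\|u\|_{H^{-1}}^2+\|u\|_{H^1}^2\le 2C'$; combined with $\|u\|_{H^1}^2\ge\lambda_1^2\|u\|_{H^{-1}}^2$, Gronwall's lemma gives \eqref{diss} at $s=-1$. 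Global well-posedness in $H^{-1}$ then follows by a standard contraction argument applied to the mild formulation $u(t) = e^{-A^2 t}u(0)-\int_0^t e^{-A^2(t-\tau)}AF(u(\tau))\,d\tau$, exploiting the Lipschitz and boundedness assumptions on $F$ together with the semigroup bound $\|A^\alpha e^{-A^2 \tau}\|_{H\to H}\le C_\alpha \tau^{-\alpha/2}$ for $\alpha>0$.

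\textbf{Higher regularity and smoothing.} To extend \eqref{diss} to all $s\in[-1,2]$ and to obtain \eqref{smooth} I would combine three ingredients: (i) the $H$-energy estimate obtained by multiplying \eqref{main_eq} by $u$, which after absorbing $(F(u),Au)$ by Young gives $\tfrac{d}{dt}\|u\|^2+\|u\|_{H^2}^2\le C$; (ii) the variation-of-constants representation with the spectral bounds on $e^{-A^2 t}$, which yields an $H^s$-bound on $(0,1]$ for any $s<2$; and (iii) a Lions-type trick of multiplying the differential inequality in (i) by an appropriate positive power of $t$ to convert the $L^2$-in-time $H^2$-bound into the pointwise smoothing $\|u(t)\|_{H^2}\le Ct^{-1}\|u(0)\|_H+R_0$. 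Interpolation between this smoothing and the $H^{-1}$-dissipativity of Step 1, together with a time-bootstrap starting at $t=1$, propagates the exponential decay to every $s\in[-1,2]$.

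\textbf{The $Q_N$-estimate and the main obstacle.} Applying $Q_N$ to \eqref{main_eq} and using $[A,Q_N]=0$ gives $\partial_t Q_N u + A^2 Q_N u = -AQ_N F(u)$ on $H_-=Q_N H$, still with $\|Q_N F(u)\|_H\le C$. On $H_-$ the spectral bound refines to
\[
\|A^\alpha e^{-A^2 \tau}\|_{H_-\to H_-}\le C_\alpha \tau^{-\alpha/2} e^{-\lambda_{N+1}^2 \tau/2},\qquad \alpha>0,
\]
and feeding this into the variation-of-constants formula with $\alpha=2-\kappa/2$ produces the homogeneous exponential-decay term together with an inhomogeneous convolution bounded by $C\int_0^\infty \sigma^{-1+\kappa/4}e^{-\lambda_{N+1}^2 \sigma/2}\,d\sigma = C\lambda_{N+1}^{-\kappa/2}\Gamma(\kappa/4)\le C\lambda_1^{-\kappa/2}\Gamma(\kappa/4)$, finite precisely because $\kappa>0$ makes the weight integrable at the origin. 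The principal obstacle is ensuring that $C$, $\gamma$ and $R_\kappa$ are genuinely \emph{independent} of $N$: this is crucial for the spatial-averaging construction of later sections, and follows because all inputs -- the $N$-free bound $\|F(u)\|_H\le C$ and the monotonicity $\lambda_{N+1}\ge\lambda_1$ -- are $N$-free (indeed the estimate even improves as $N\to\infty$), while the upper bound $\kappa\le 3$ is consistent with the base range $s=2-\kappa\ge-1$ in which the equation is well posed.
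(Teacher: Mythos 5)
The paper itself does not prove this proposition: it is stated as a collection of standard parabolic estimates with a pointer to \cite{hen,Zel,tem}. So the relevant question is whether your reconstruction of the standard argument is sound. Most of it is. The $H^{-1}$ energy estimate is correct (and is exactly the computation one would write), the well-posedness sketch via the mild formulation and the bound $\|A^\alpha e^{-A^2\tau}\|_{H\to H}\le C_\alpha\tau^{-\alpha/2}$ is the standard Henry-type argument (with the usual caveat that the contraction must be run in a $t^{1/4}$-weighted norm, since $F$ is Lipschitz $H\to H$ and not $H^{-1}\to H^{-1}$), and your treatment of part 3 is complete and correct: the restriction of the semigroup to $H_-$ gives the extra factor $e^{-\lambda_{N+1}^2\tau/2}$, the exponent $\alpha=2-\kappa/2$ yields an integrable kernel precisely because $\kappa>0$, and your observation that every constant entering the estimate is $N$-free (indeed improves as $N\to\infty$) is the point that actually matters for the later sections.

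The one genuine gap is the endpoint $s=2$ in parts 1 and 2. Your ingredient (ii) explicitly stops at $s<2$: the Duhamel kernel $\|A^{s/2+1}e^{-A^2\tau}\|_{H\to H}\lesssim\tau^{-(s+2)/4}$ ceases to be integrable exactly at $s=2$, and a forcing that is merely bounded in $H^{-2}$ does not in general put the Duhamel integral into $H^2$. Your ingredient (iii) does not repair this: multiplying $\frac{d}{dt}\|u\|_H^2+\|u\|_{H^2}^2\le C$ by a power of $t$ only ever yields \emph{weighted integral} bounds on $\|u\|_{H^2}^2$, because that quantity appears solely as the dissipation term, never as the quantity being differentiated; and one cannot obtain a differential inequality for $\|u\|_{H^2}^2$ directly, since multiplying \eqref{main_eq} by $A^2u$ produces the pairing $(F(u),A^3u)$, which with $F(u)$ only in $H$ is controlled by $\|u\|_{H^6}$ and cannot be absorbed into $\|u\|_{H^4}^2$. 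The standard repairs are either (a) the H\"older-continuity-in-time argument (from the $H^{2-\kappa}$ bounds, $t\mapsto AF(u(t))$ is H\"older into $H^{-2}$, whence the Duhamel integral lies in $D(A^2;H^{-2})=H^2$), or (b) the Lions-type trick applied one level up: the difference quotient $v_h=(u(\cdot+h)-u(\cdot))/h$ satisfies $\frac{d}{dt}\|v_h\|_{H^{-2}}^2+\|v_h\|_H^2\le L^2\|v_h\|_{H^{-2}}^2$; multiplying \emph{this} by $t^2$ and using the $L^2_t H^2$ bound from your step (i) gives $\|\partial_t u(t)\|_{H^{-2}}\le Ct^{-1}(1+\|u(0)\|_H)$, and then $\|u(t)\|_{H^2}=\|A^2u(t)\|_{H^{-2}}\le\|\partial_tu(t)\|_{H^{-2}}+\|F(u(t))\|_H$ recovers \eqref{smooth} with exactly the stated $t^{-1}$ rate; the $s=2$ case of \eqref{diss} then follows by concatenating this unit-interval smoothing with the $H$-dissipativity. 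With that substitution for (iii), your proof is complete.
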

We are now ready to give the key definition of the paper, namely, to define the inertial manifold (IM) associated with the Cahn-Hilliard equation.

\begin{definition}\label{Def1.IM}
The set $\M\subset H$ to be called an inertial manifold for problem \eqref{main_eq} if the following conditions are satisfied:
\par
1. The set $\M$ is invariant with respect to the solution semigroup $S(t)$, i.e. $S(t) \M = \M$;
\par
2. It can be presented as a graph of a Lipschitz continuous function $\Phi: H_{+} \to H_{-} $:
%$$
\begin{equation}
\M:= \{u_+ + \Phi(u_+), u_+ \in H_+\};
\end{equation}
%$$
\par
3. The exponential tracking property holds, i.e., there exist positive constants $C$ and $\alpha$ such that for every $u_0 \in H^{-1}$ there is $v_0 \in \mathcal M$ such that
%$$
\begin{equation}\label{1.llip}
\| S(t)u_0 - S(t)v_0\|_{H^{-1}} \le C e^{-\alpha t}\| u_0 - v_0\|_{H^{-1}},\quad t\geq0.
\end{equation}
%$$
\end{definition}
As usual, to verify the existence of the IM, we will use invariant cones method. Namely,
introduce the following quadratic form in $H^{-1}$:
%$$
\begin{equation}\label{quadratic}
V( \xi)=\|Q_N \xi \|^2_{H^{-1}}- \|P_N \xi \|^2_{H^{-1}},\ \xi \in H^{-1}
\end{equation}
%$$
and set $K^+=K^+(N):=\{\xi\in H^{-1},\ \ V(\xi)\le0\}$ to be the associated cone.

\begin{definition}\label{Def1.con-squeez} Let the above assumptions hold.
We say that equation \eqref{main_eq} possesses the cone  property (invariance of the cone $K^+$) if
%$$
\begin{equation}\label{cone_con}
\xi_1 - \xi_2 \in K^+ \Rightarrow S(t)\xi_1 - S(t)\xi_2 \in K^+, \text{ for all } t\ge 0,
\end{equation}
%$$
where $\xi_1, \xi_2 \in H^{-1}$ and $S(t)$ is a solution semigroup associated with \eqref{main_eq}.
\par
Analogously, we say that \eqref{main_eq} possesses the squeezing property if there exists positive $\gamma$ and $C$ such that
%$$
\begin{equation}\label{squeez_pr}
S(T)\xi_1- S(T)\xi_2 \not \in K^+ \Rightarrow \|S(t)\xi_1 - S(t)\xi_2\|_{H^{-1}} \le C e^{-\gamma t} \|\xi_1 - \xi_2\|_{H^{-1}},\  t\in [0, T].
\end{equation}
%$$
\end{definition}

\section{Invariant cones, squeezing property and inertial manifolds}\label{s2}

The aim of this section is to remind the reader how to construct an IM based on the cone and squeezing property, see \cite{fen,FST,mal-par,rom-man} for more details. So, the main result of the section is the following theorem.
\begin{theorem}\label{th_in_man}
Let the non-linearity $F$ be globally Lipschitz and globally bounded, see \eqref{1.lip} and let, in addition, the solution semigroup $S(t)$ associated with equation \eqref{main_eq} satisfies the cone and squeezing properties \eqref{cone_con} and \eqref{squeez_pr} for some $N\in\mathbb N$, see Definition \ref{Def1.con-squeez}. Then equation \eqref{main_eq} possesses an N-dimensional inertial manifold in the sense of Definition \ref{Def1.IM}.
\end{theorem}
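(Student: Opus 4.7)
The plan is to construct the manifold via complete bounded trajectories. Define
\begin{equation*}
\M := \{u_0 \in H^{-1} \colon \text{there is a bounded solution } u\colon\R\to H^{-1} \text{ of \eqref{main_eq} with } u(0) = u_0\},
\end{equation*}
and aim to show that for every $p \in H_+$ there is a unique such trajectory with $P_N u(0) = p$; setting $\Phi(p) := Q_N u(0)$ will then represent $\M$ as the graph of $\Phi$ over $H_+$. Forward dissipativity is given by Proposition \ref{Prop1.trivial}, so the substantive content concerns the backward extension of trajectories.

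The key observation for both uniqueness of the trajectory and Lipschitz dependence on $p$ is the following dichotomy. If $u^1,u^2$ are two complete bounded trajectories and $u^1(0)-u^2(0) \notin K^+$, then by the contrapositive of the cone property \eqref{cone_con} applied over $[-S,0]$, also $u^1(-S)-u^2(-S) \notin K^+$ for every $S\ge 0$. The squeezing estimate \eqref{squeez_pr} on $[0,S]$ with initial data $u^i(-S)$ then gives
\begin{equation*}
\|u^1(0)-u^2(0)\|_{H^{-1}} \le Ce^{-\gamma S}\|u^1(-S)-u^2(-S)\|_{H^{-1}},
\end{equation*}
so backward boundedness of $u^1-u^2$ forces a contradiction unless $u^1(0)=u^2(0)$. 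Applied with $P_N u^1(0)=P_N u^2(0)$, this yields uniqueness, since in that case $V(u^1(0)-u^2(0)) = \|Q_N(u^1(0)-u^2(0))\|_{H^{-1}}^2\ge 0$ puts the difference outside $K^+$ unless it vanishes. Applied to arbitrary $p,q\in H_+$, the same dichotomy forces $u^p(0)-u^q(0)\in K^+$, whence $\|\Phi(p)-\Phi(q)\|_{H^{-1}} \le \|p-q\|_{H^{-1}}$. Existence of the trajectory with prescribed $P_N u(0)=p$ I would obtain as a limit $T\to\infty$ of finite-horizon approximations $u^T$ on $[-T,0]$ with, say, $P_N u^T(-T)=0$, using the smoothing and dissipativity estimates of Proposition \ref{Prop1.trivial} to extract a convergent subsequence.

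The invariance $S(t)\M=\M$ is immediate from the construction, since a time translate of a complete bounded trajectory is again one. For exponential tracking, given $u_0\in H^{-1}$ and $T>0$, I would find $v_0^T \in \M$ with $P_N S(T) v_0^T = P_N S(T) u_0$, so that $S(T)u_0 - S(T)v_0^T \in H_-$ lies outside $K^+$; the contrapositive of \eqref{cone_con} then yields $S(t)u_0 - S(t)v_0^T \notin K^+$ for all $t \in [0,T]$, and \eqref{squeez_pr} delivers the exponential bound on this interval. Passing to a subsequential limit as $T\to\infty$ via compactness yields a limit point $v_0 \in \M$ that realises \eqref{1.llip} on all of $[0,\infty)$.

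The principal technical obstacle is the surjectivity of the ``projected evolution'' map $p \mapsto P_N S(T)(p + \Phi(p))\colon H_+ \to H_+$ required to produce $v_0^T$ in the tracking step. Cone monotonicity supplies injectivity and a one-sided Lipschitz inverse, but surjectivity onto the finite-dimensional space $H_+$ requires a topological argument (Brouwer degree or invariance of domain) combined with a priori bounds from dissipativity to control behaviour at infinity. All remaining components reduce to the in-$K^+$/not-in-$K^+$ dichotomy above, together with the interplay of \eqref{cone_con}, \eqref{squeez_pr} and the smoothing estimate \eqref{smooth}.
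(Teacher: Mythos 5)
Your overall strategy is the same as the paper's (backward boundary-value problems with the $Q_N$-component pinned at $t=-T$, the in-cone/out-of-cone dichotomy combined with squeezing for uniqueness, Lipschitz continuity and tracking, and invariance of domain for surjectivity of the projected evolution). However, there is one genuine flaw in the setup. You define $\M$ as the set of values at $t=0$ of complete trajectories that are \emph{bounded on all of} $\R$ in $H^{-1}$. For a dissipative system with globally bounded nonlinearity, that set is precisely the global attractor, whose projection $P_N\mathcal A$ is a compact subset of $H_+$ --- so your claim that ``for every $p\in H_+$ there is a unique such trajectory with $P_Nu(0)=p$'' fails for $p$ outside this compact set. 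Trajectories on the inertial manifold away from the attractor are necessarily backward \emph{unbounded}: already in the linear case $F=0$ the manifold is $H_+$ itself and $u(t)=e^{-A^2t}u_0$ blows up as $t\to-\infty$ for every $u_0\in H_+\setminus\{0\}$. Consequently the limits of your finite-horizon approximations would not belong to the set $\M$ you defined. The correct requirement is backward boundedness of the $Q_N$-component only, which the approximations with $Q_Nu^T(-T)=0$ provide automatically via \eqref{diss_Q_N} (this is estimate \eqref{Q_est} in the paper). Your dichotomy argument survives this weakening, because a difference lying outside $K^+$ satisfies $\|P_Nv\|_{H^{-1}}\le\|Q_Nv\|_{H^{-1}}$, so the full $H^{-1}$-norm at $t=-S$ is controlled by the $Q_N$-bound alone --- exactly the step \eqref{2.estt} in the paper. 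With that repair your uniqueness, Lipschitz and tracking arguments go through as written.

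Two smaller points. First, the condition ``$P_Nu^T(-T)=0$'' must be ``$Q_Nu^T(-T)=0$'': as stated it is incompatible with prescribing $P_Nu^T(0)=p$ (two conditions on the same finite-dimensional component), and it is the $Q_N$-pinning that yields the uniform bound \eqref{diss_Q_N}. Second, the topological (invariance-of-domain) argument is needed in the \emph{existence} step, to show that $G_T(w)=P_NS(T)w$ maps $H_+$ onto $H_+$ and hence that the boundary-value problem is solvable for every $p$; once the family of complete trajectories $\mathcal N$ is built and shown to be shift-invariant, the comparison trajectory $v_0^T$ in the tracking step is obtained for free by time-shifting the trajectory through $P_NS(T)u_0+\Phi(P_NS(T)u_0)$, with no further degree-theoretic input. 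Also note that the paper proves the full sequence of approximations is Cauchy (difference of two approximations has vanishing $P_N$-part at $t=0$, hence lies outside $K^+$, hence squeezes), which is cleaner than extracting a subsequence and then invoking uniqueness, though both routes work.
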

\begin{proof}
{\it Step 1.} Let us consider the following boundary value problem:
%$$
\begin{equation} \label{aux_eq}
\pt u + A^2 u + A F(u) = 0, \ \ P_N u|_{t=0}=u_0^+, \ \ Q_N u |_{t= - T} = 0.
\end{equation}
%$$
We claim that it has a unique solution for any $T>0$ and any $u_0^+\in H^+$. Indeed, introduce the map $G_T: H^+ \to H^+$ by the following rule:
%$$
\begin{equation}
G_T(w) = P_N S(T)w, \ \ w\in H^+,
\end{equation}
%$$
where $S(t)$ is a solution operator of problem \eqref{main_eq}. Obviously this map is continuous. We want to prove that this map is invertible. Indeed, let
$u_1(t)$, $u_2(t)$ be two solutions of the problem \eqref{aux_eq} (with different initial data $u_1(-T)$ and $u_2(-T)$ belonging to $H_+$). Then, their difference $v(t)=u_1(t)-u_2(t)$
 lies at the cone $K^+$ at the moment $t = -T$. Thus, from the cone property we conclude that
%$$
\begin{equation}
u_1(t) - u_2(t) \in K^+, \ \ t\in [-T, 0].
\end{equation}
%$$
The next lemma is the main technical tool for verifying the one-to-one property.
\begin{lemma}\label{Lem2.new} Let the above assumptions hold. Then, the following estimate hold for the solutions $u_1(t)$ and $u_2(t)$:
%$$
\begin{equation}\label{2.con-lip}
\|u_1(-T)- u_2(-T)\|^2_{H^{-1}} \le C e^{\alpha T}\|P_N u_1(0)- P_N u_2(0)\|^2_{H^{-1}},
\end{equation}
%$$
for some constants $C$ and $\alpha$ which are independent of $u_i$.
\end{lemma}
\begin{proof}[Proof of the lemma] Since $v(t)\in K^+$, we have the estimate
%$$
\begin{equation}\label{2.con-est}
\|v_-(t)\|^2_{H^{-1}}\le\|v_+(t)\|^2_{H^{-1}}.
\end{equation}
%$$
Multiplying now the equation for the difference $v$ by $A^{-1}v_+$ and using that the nonlinearity is globally Lipschitz, we have
$$
\frac d{dt}\|v_+(t)\|^2_{H^{-1}}+\|v_+(t)\|^2_{H^1}=(F(u_1)-F(u_2),v_+)\ge-L\|v_+\|^2_H-L\|v_+\|_H\|v_-\|_H.
$$
Integrating this inequality over $s\in(t,0)$ and using the interpolation between $H^1$ and $H^{-1}$ together with estimate \eqref{2.con-est}, we end up with
%$$
\begin{equation}\label{2.almost}
\|v_+(t)\|^2_{H^{-1}}\le\|v_+(0)\|^2_{H^{-1}}+ C\int_t^0\|v_+(s)\|^2_{H^{-1}}\,ds+\frac12\int_t^0\|v_-(s)\|^2_{H^1}\,ds.
\end{equation}
%$$
To estimate the last term in the right-hand side, we multiply the equation for $v$ by $A^{-1}v_-$ and integrate over $s\in(t,0)$. With the help of \eqref{2.con-est} again, this gives
%$$
\begin{multline}
\int_t^0\|v_-(s)\|^2_{H^1}\,ds=\frac12\(\|v_-(0)\|^2_{H^{-1}}-\|v_-(t)\|^2_{H^{-1}}\)+\int_t^0(F(u_1)-F(u_2),v_-)\,ds
\le\\\le\frac12\|v_+(0)\|^2_{H^{-1}}
+L\int_t^0\|v\|^2_H\le\frac12\|v_+(0)\|^2_{H^{-1}}+\frac12\int_t^0\|v_-(s)\|^2_{H^1}\,ds+C\int_t^0\|v_+(s)\|^2_{H^{-1}}\,ds.
\end{multline}
%$$
Inserting the last estimate into the RHS of \eqref{2.almost}, we finally arrive at
%$$
\begin{equation}\label{2.almost1}
\|v_+(t)\|^2_{H^{-1}}\le 2\|v_+(0)\|^2_{H^{-1}}+ C\int_t^0\|v_+(s)\|^2_{H^{-1}}\,ds
\end{equation}
%$$
and the Gronwall inequality finishes the proof of the lemma.
\end{proof}

We are now ready to finish the first step of the proof of the theorem. Indeed,
since $u_1(t)$ and $u_2(t)$ were chosen arbitrary, then we conclude that the map $G_T: H_+ \to H_+$ is injective. Consequently, by the Brouwer invariance of domain theorem, $G_T(H_+)$ is open. Moreover, estimate \eqref{2.con-lip} guarantees that the sequence $w_n\in H_+$ is bounded if $G_T(w_n)$ is bounded. Then, since $H_+$ is finite-dimensional, $G_T(H_+)$ is also closed by compactness arguments. Thus, $G_T(H_+)=H_+$ and $G_T$ is a (bi-Lipschitz) homeomorphism on $H_+$. Therefore,  $G_T(v) = u_0^+$ has a unique solution for all $u_0^+ \in H_+$. Then, obviously $u(t) = S(t+T)G_T^{-1}(u_0^+)$ solves \eqref{aux_eq} and the first step is completed.

{\it Step 2.} Let $u_{T, u_0^+}$ be the solution of the boundary value problem \eqref{aux_eq}. We claim that for all $t\le0$, there exists a limit
%$$
\begin{equation} \label{lim}
u_{u_0^+}(t) = \lim_{T \to \infty} u_{T,u_0^+}(t)
\end{equation}
%$$
 which solves problem \eqref{aux_eq} with $T =\infty$.

Indeed,
 since solution of the problem \eqref{aux_eq} starts from $u_-(-T) = 0$, according to Proposition \ref{Prop1.trivial}, we have:
%$$
\begin{equation}\label{Q_est}
\|Q_N u_{T, u_0^+}(t)\|^2_{H^{2-\kappa}} \le \tilde{C_\kappa}
\end{equation}
%$$
for all $T \ge 0$ and $u_0^+ \in H$ and $\kappa\in(0,3]$. In particular, the choice $\kappa=3$ gives the control of the $H^{-1}$-norm.
 \par

 Let us introduce the following notations $u_i(t) := u_{T_i, u_0^+}(t)$ and $v(t) = u_1(t) - u_2(t)$. Then we know that at the moment $t=0$ we have $v_+(0) = 0$ and consequently $v(0) \not \in K^+$. By the cone property \eqref{cone_con}
%$$
\begin{equation}\label{not_in_K}
v(t) \not \in K^+ \text{ for all } t \in [-T, 0] \text{, where }T = \min \{T_1, T_2\}.
\end{equation}
%$$
Thus, using squeezing property \eqref{squeez_pr} we get:
%$$
\begin{equation} \label{sq_pr_act}
\|v(t)\|_{H^{-1}} \le C e^{- \gamma(t+T)}\|v(-T)\|_{H^{-1}}.
\end{equation}
%$$
Due to \eqref{not_in_K}, we have $\|v_+(-T)\|_{H^{-1}}\le \|v_-(-T)\|_{H^{-1}}$ and consequently thanks to \eqref{Q_est},
%$$
\begin{equation} \label{2.estt}
\|v(t)\|_{H^{-1}} \le C_1 e^{- \gamma(t+T)}.
\end{equation}
%$$
 Thus, $u_{T_i, u_0^+}$ is a Cauchy sequence in $C_{loc}((-\infty,0), H^{-1})$. Consequently, there exists limit \eqref{lim} and $u_{u_0^+}$ is a backward solution of the problem \eqref{main_eq}.

{\it Step 3.} Let us define a set $ \mathcal{N} \subset C _{loc}(\R, H)$ as the set of all solutions of the problem \eqref{main_eq} obtained as a limit \eqref{lim}. Then, by the construction, $\mathcal{N}$ is invariant with respect to the solution semigroup $S(t)$, i.e.
%$$
\begin{equation}
S(h)\mathcal{N} = \mathcal{N}, \ \ (S(h)u)(t) = u(t+h), \ \ h \in \R.
\end{equation}
%$$
Consider function $\Phi: H_+ \to H_-$ acting by the rule:
%$$
\begin{equation}
\Phi(u_0)= Q_N u(0), \ \ u(t) \in \mathcal{N}, \ \ P_N u(0) = u_0.
\end{equation}
%$$
Indeed, according to Steps 1 and 2, the trajectory $u\in\mathcal N$ exists for any $u_0\in H_+$. Moreover, as not difficult to see by approximating the solutions $u_1,u_2\in\mathcal N$ by the solutions of the boundary value problem \eqref{aux_eq}, that
%$$
\begin{equation} \label{in_K}
u_1(t) - u_2(t) \in K^+, \ \ t\in \R
\end{equation}
%$$
for any $u_1,u_2\in\mathcal N$.
Therefore, $\Phi(u_0)$ is well defined and Lipschitz continuous.
Thus, it remains to note that manifold
%$$
\begin{equation}
\mathcal{M} := \big\{u_0 + \Phi(u_0): u_0 \in H^+\big\}
\end{equation}
%$$
is invariant with respect to $S(t)$ which is a direct consequence of the invariance of $\mathcal{N}$.

{\it Step 4.} In order to prove that $\mathcal{M}$ is a desired inertial manifold, it remains to show that exponential tracking property holds. Let $u(t)$ , $t \ge 0$, be a forward solution of the problem \eqref{main_eq} and $u_T(t) \in \mathcal{N}$, $T >0$, be the solution of \eqref{main_eq} which belongs to the manifold $\mathcal{M}$ such that
%$$
\begin{equation}
P_N u (T) = P_N u_T (T).
\end{equation}
%$$
Then, due to the cone property \eqref{cone_con} $u(t) - u_T(t) \not \in K^+$ as $t \in [0,T]$ and consequently
%$$
\begin{equation}
\|P_N(u(t) - u_T(t))\|^2_{H^{-1}} \le \|Q_N (u(t) - u_T(t))\|^2_{H^{-1}}, \ \ t \in [0,T].
\end{equation}
%$$
From \eqref{Q_est} we know that $\|Q_N u_T(t)\|_{H^{-1}}$ is uniformly bounded with respect to $T$. Since $u(t)$ is also bounded in $H^{-1}$, see Proposition \ref{Prop1.trivial}, we conclude that the functions
$u_T(t)\in\mathcal M$ are uniformly bounded as $T \to \infty$ in the norm $H^{-1}$  for all fixed $t \in [0,T]$. On the other hand, using squeezing property \eqref{squeez_pr} we obtain:
%$$
\begin{equation} \label{exp_tr}
\|u(t) - u_T(t)\|_{H^{-1}} \le C e^{-\gamma t} \|u(0) - u_T(0)\|_{H^{-1}}, \ \ t\in [0,T].
\end{equation}
%$$
Since the manifold $\mathcal M$ is finite-dimensional and $u_{T}(0)$ is uniformly bounded, we may assume
 without loss of generality that $u_{T_n}(0)\to\tilde u(0)$ for some $T_n\to\infty$ and $\tilde u(0)\in\mathcal M$. Then, according to \eqref{1.llip} and Lemma \ref{Lem2.new}, $u_{T_n}\to\tilde u\in\mathcal N$ in $C_{loc}(\R,H^{-1})$. Passing to the limit $n\to\infty$ at  \eqref{exp_tr} we see that
 %$$
\begin{equation} \label{2.exptr}
\|u(t) - \tilde u(t)\|_{H^{-1}} \le C e^{-\gamma t} \|u(0) - \tilde u(0)\|_{H^{-1}}, \ \ t\ge0.
\end{equation}
%$$
Thus, the exponential tracking is verified and the theorem is proved.
\end{proof}

\begin{remark}\label{Rem2.smooth} Note that, according to Lemma \ref{Lem2.new} and the property \eqref{in_K}, any two solutions $u_1,u_2\in\mathcal N$ satisfy the backward Lipschitz continuity
%$$
\begin{equation}\label{est_tech}
\|u_1(-T)-u_2(-T)\|_{H^{-1}}\le Ce^{\alpha T}\|u_1(0)-u_2(0)\|_{H^{-1}},\ \ T\ge0,
\end{equation}
%$$
where the positive constants $C$ and $\alpha$ are independent of $u_1,u_2\in\mathcal N$ and $T\ge0$.
\par
One more simple but important observation is that the above given proof uses the cone and squeezing property not for all $u_1,u_2\in H^{-1}$ but only for those which satisfy estimate \eqref{Q_est}. By this reason, we actually need to verify the cone and squeezing property only for $u_1,u_2\in H^{2-\kappa}$ for some $\kappa\in(0,3]$ such that
%$$
\begin{equation}\label{2.important}
\|Q_N u_i\|_{H^{2-\kappa}}\le C_\kappa,\ \ i=1,2,
\end{equation}
%$$
where $C_\kappa$ is some constant depending only on $\kappa$. Indeed, this inequality is automatically satisfied for all trajectories involving into the construction of the set $\mathcal N$ and the associated inertial manifold $\mathcal M$ (due to estimate \eqref{diss_Q_N} and the fact that $Q_Nu(-T)=0$ in the boundary value problem \eqref{aux_eq}). The exponential tracking a priori holds for all trajectories starting from $u(0)\in H^{-1}$, however, due to the smoothing property (see Proposition \ref{Prop1.trivial}), it is sufficient to verify it only for $u(0)$ satisfying \eqref{2.important}. This observation plays a crucial role in the construction of a special cut-off for the spatial averaging method, see below.
\end{remark}

%\begin{lemma}\label{lem_tech}
%Let $u_1(t)$ and $u_2(t)$, $t \in [-T, 0]$ be two solutions of the problem \eqref{main_eq} such that $u_1(t) - u_2(t) \in K^+$ for all %$ t\in [-T,0]$. Then the following estimate holds:
%\begin{equation} \label{est_tech}
%\|u_1(-T)- u_2(-T)\|^2_{H^{-1}} \le C e^{\alpha_+ T}\|P_N(u_1(0)- u_2(0))\|^2_{H^{-1}}.
%\end{equation}
%\end{lemma}
%\begin{proof}
% As usual we denote $v(t)= u_1(t) - u_2(t)$. Using cone property and Lemma \eqref{lem_V_eps} we get:
%\begin{multline}
%\varepsilon \|v(-T)\|^2_{H^{-1}} \le \|v(-T)\|^2_{H^{-1}}  - V( v(-T)) = - V_{-\varepsilon}(v(-T)) \le - V_{-\varepsilon}(v(0)) %e^{\int_t^0\alpha(s)ds}= \\ = e^{\int_t^0\alpha(s)ds}((1 + \varepsilon)\|v_+(0)\|^2_{H^{-1}} - (1 - %\varepsilon)\|v_-(0))\|^2_{H^{-1}}\le (1 + \varepsilon)e^{\alpha_+ T}\|v_+(0)\|^2_{H^{-1}}
%\end{multline}
%and \eqref{est_tech} is proved.
%\end{proof}

\section{Invariant cones and normal hyperbolicity}\label{s3}

In this section, we reformulate the cone and squeezing property in a more convenient (at least for our purposes) form of a single differential inequality on the trajectories of \eqref{main_eq} and its equation of variations and state a number of technical results related with the invariant cones and exponential dichotomy/normal hyperbolicity for the trajectories of the corresponding equation of variations. These results will be used in the next section for establishing the smoothness of the IM. We further assume that the nonlinearity $F(u)$ is at least Gateaux differentiable and there exists a derivative $F'(u)\in\mathcal L(H,H)$ for any $u\in H$. Then, obviously
%$$
\begin{equation}\label{3.1}
\|F'(u)\|_{\mathcal L(H,H)}\le L
\end{equation}
%$$
and we also assume that the following integral version of the mean value theorem holds:
%$$
\begin{equation}\label{3.2}
F(u_1)-F(u_2)=l_{u_1,u_2}(u_1-u_2),\ \ l_{u_1,u_2}:=\int_0^1F'(su_1+(1-s)u_2)\,ds.
\end{equation}
%$$
Then, the difference $v(t)=u_1(t)-u_2(t)$ of any two solutions $u_1(t)$ and $u_2(t)$ of the Cahn-Hilliard problem \eqref{main_eq} solves the following linear equation:
%$$
\begin{equation}\label{3.dif}
\Dt v+A(Av+l_{u_1(t),u_2(t)}v)=0,
\end{equation}
%$$
however, it will be more convenient for us to study more general linear equations
%$$
\begin{equation}\label{3.gen}
\Dt v+A(Av+l(t)v)=0,
\end{equation}
%$$
where $l\in L^\infty(\R,\mathcal L(H,H))$ satisfies $\|l(t)\|_{\mathcal L(H,H)}\le L$ for all $t\in\R$.
%$$
\begin{definition}\label{Def3.diffcon} We say that equation \eqref{3.gen} satisfies the strong cone condition (in a differential
 form) if there exist a positive number $\mu$ and a function $\alpha:\R\to\R$ such that
 %$$
 \begin{equation}\label{3.squeez}
 0<\alpha_-\le \alpha(t)\le \alpha_+
 \end{equation}
 %$$
 and for any solution $v(t)$, $t\in[S,T]$, $S<T$, of problem \eqref{3.gen}, the following inequality holds:
 %$$
 \begin{equation}\label{3.dcone}
 \frac d{dt}V(v(t))+\alpha(t)V(v(t))\le-\mu\|v(t)\|^2_H
 \end{equation}
 %$$
 for all $t\in[S,T]$. Here and below $V$ is the quadratic form defined by \eqref{quadratic}.
\end{definition}
%$$
As easy to see from inequality \eqref{3.dcone}, the above assumptions guarantee that the cone $K^+$ is invariant with respect to the evolution generated by equation \eqref{3.gen}. Moreover, as will be shown below, the squeezing property is also incorporated in our version of the strong cone condition (due to the strict positivity of the exponent $\alpha(t)$), but we first need to remind some elementary properties of the introduced strong cone condition. We start with reformulating it in a pointwise form applicable to the non-homogeneous form of equation \eqref{3.gen}.
\begin{lemma}\label{Lem3.point} The strong cone condition in the differential form is equivalent to the following condition:
%$$
\begin{equation}\label{8}
-2 (l(t)w,w_--w_+)-2(A w,w_--w_+)+\alpha(t)(\|w_-\|^2_{H^{-1}}-\|w_+\|^2_{H^{-1}})\le-\mu\|w\|^2_H
\end{equation}
%$$
for all $t\in\R$ and $w\in H^1$.
\end{lemma}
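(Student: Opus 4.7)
The plan is to prove Lemma \ref{Lem3.point} by a direct calculation: I will differentiate $V(v(t))$ along an arbitrary solution $v$ of \eqref{3.gen} and show that the resulting expression $\frac{d}{dt}V(v(t))+\alpha(t)V(v(t))$ is identically equal to the left-hand side of \eqref{8} evaluated at $w=v(t)$. Once this pointwise identity is in hand, the equivalence claimed by the lemma reduces to a trivial solvability statement for the linear Cauchy problem.

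First I would write $V(v)=(A^{-1}v_-,v_-)-(A^{-1}v_+,v_+)$ and differentiate in $t$. The crucial observation is that $P_N$ and $Q_N$ are spectral projectors of $A$, hence they commute with $A$ and with $A^{-1}$; applying them to \eqref{3.gen} therefore yields
\begin{equation*}
\Dt v_\pm = -A^2 v_\pm - A\,(l(t)v)_\pm.
\end{equation*}
Substituting into $\frac{d}{dt}\|v_\pm\|^2_{H^{-1}}=2(A^{-1}v_\pm,\Dt v_\pm)$, the $A^{-1}\cdot A^2$ pairing collapses to $A$, and using $(v_\pm,(l(t)v)_\pm)=(v_\pm,l(t)v)$ (since $v_\pm$ is orthogonal in $H$ to $H_\mp$) I obtain
\begin{equation*}
\tfrac{d}{dt}\|v_\pm\|^2_{H^{-1}}=-2(Av_\pm,v_\pm)-2(v_\pm,l(t)v).
\end{equation*}
Subtracting the $+$ equation from the $-$ equation, and using the spectral identity $(Av_\pm,v_\pm)=(Av,v_\pm)$, gives
\begin{equation*}
\tfrac{d}{dt}V(v)=-2(Av,v_--v_+)-2(l(t)v,v_--v_+).
\end{equation*}
Adding $\alpha(t)V(v)=\alpha(t)(\|v_-\|^2_{H^{-1}}-\|v_+\|^2_{H^{-1}})$ produces precisely the left-hand side of \eqref{8} with $w=v(t)$.

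Given this identity, the equivalence follows at once. If the pointwise inequality \eqref{8} holds for all $(t,w)\in\R\times H^1$, then substituting $w=v(t)$ for any solution of \eqref{3.gen} (which lies in $H^1$ for $t>S$ by parabolic smoothing for the linear fourth-order equation) reproduces \eqref{3.dcone}. Conversely, given any $t_0\in\R$ and $w\in H^1$, standard linear parabolic theory for \eqref{3.gen} with $l\in L^\infty(\R,\mathcal L(H,H))$ provides a solution $v$ defined on a neighbourhood of $t_0$ with $v(t_0)=w$; inserting this $v$ into \eqref{3.dcone} at $t=t_0$ yields \eqref{8} at that arbitrary $(t_0,w)$.

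There is no real obstacle—the content of the lemma is genuinely an algebraic identity packaged as an equivalence. The only points requiring mild care are (i) making sure the bookkeeping with $P_N$, $Q_N$, and $A$ uses their mutual commutation and the resulting identities $(Av_\pm,v_\pm)=(Av,v_\pm)$ and $(v_\pm,(l(t)v)_\pm)=(v_\pm,l(t)v)$ correctly, and (ii) justifying the calculation at the required level of regularity, which is handled by the parabolic smoothing of the linear Cauchy problem associated with \eqref{3.gen} and a standard density argument to extend \eqref{8} from smooth $w$ to arbitrary $w\in H^1$.
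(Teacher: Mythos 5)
Your proposal is correct and follows essentially the same route as the paper: both compute the identity $\frac{d}{dt}V(v(t))=-2(Av,v_--v_+)-2(l(t)v,v_--v_+)$ along solutions of \eqref{3.gen}, deduce the forward implication by taking $w=v(t)$, and obtain the converse by solving the Cauchy problem for \eqref{3.gen} with data $v(t_0)=w$ at an arbitrary $t_0$. Your additional remarks on regularity and density are a harmless refinement the paper leaves implicit.
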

\begin{proof} Indeed, let condition \eqref{8} be satisfied. Then, multiplying equation \eqref{3.gen} by the expression $A^{-1}(v_-(t)-v_+(t))$ and using equation \eqref{3.gen}, we have
%$$
\begin{equation}\label{3.pro}
\frac d{dt}V(v(t))=2(\Dt v(t),v_-(t)-v_+(t))=-2(Av(t),v_-(t)-v_+(t))-2(l(t)v(t),v_-(t)-v_+(t)).
\end{equation}
%$$
Estimating the right-hand side of this inequality by \eqref{8} with $w=v(t)$, we end up with the desired cone inequality \eqref{3.dcone}. Vise versa, let the cone condition \eqref{3.dcone} hold and let $w\in H^1$ and $t_0\in\R$ be arbitrary. Let us  consider the solution $v(t)$ of equation \eqref{3.gen} satisfying $v(t_0)=w$. Using then the cone condition \eqref{3.dcone} with $t=t_0$ and formula \eqref{3.pro} for the derivative of the quadratic form $V$, we end up with the desired inequality \eqref{8} with $t=t_0$. Thus, the lemma is proved.
\end{proof}
\begin{corollary}\label{Cor3.non} Assume that equation \eqref{3.gen} possesses the strong cone property in the sense of Definition \ref{Def3.diffcon}. Then, for any solution $v(t)$ of the non-homogeneous equation
%$$
\begin{equation}\label{9}
\Dt v+A^2 v + A\big( l(t)v\big)+ A h(t) = 0
\end{equation}
%$$
with $h\in L^\infty(\R,H^{-1})$ the following analogue of \eqref{3.dcone} holds:
%$$
\begin{equation}
\label{10}
\frac d{dt} V(v(t))+\alpha(t) V(v(t))\le -\mu\|v(t)\|^2_H - 2(h(t),v_-(t)-v_+(t)).
\end{equation}
%$$
\end{corollary}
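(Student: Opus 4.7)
The plan is to imitate the derivation given in Lemma \ref{Lem3.point}, but keep track of the extra forcing term coming from $Ah(t)$. Specifically, I would test the non-homogeneous equation \eqref{9} with $A^{-1}(v_-(t)-v_+(t))$. Since $P_N$ and $Q_N$ commute with $A$ and the resulting pairing reproduces the quadratic form $V$ through the identity
\begin{equation*}
(A^{-1}\Dt v,\, v_--v_+)=\tfrac12\tfrac{d}{dt}\bigl(\|v_-\|_{H^{-1}}^2-\|v_+\|_{H^{-1}}^2\bigr)=\tfrac12\tfrac{d}{dt}V(v),
\end{equation*}
this multiplication converts \eqref{9} into the pointwise identity
\begin{equation*}
\tfrac12\tfrac{d}{dt}V(v(t))+(Av(t),v_-(t)-v_+(t))+(l(t)v(t),v_-(t)-v_+(t))+(h(t),v_-(t)-v_+(t))=0.
\end{equation*}

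Next, I would invoke the pointwise reformulation of the strong cone condition, namely inequality \eqref{8} of Lemma \ref{Lem3.point}, applied with $w=v(t)$. This immediately gives
\begin{equation*}
-2(Av(t),v_-(t)-v_+(t))-2(l(t)v(t),v_-(t)-v_+(t))\le -\alpha(t)V(v(t))-\mu\|v(t)\|^2_H.
\end{equation*}
Rearranging the identity above to isolate $\tfrac{d}{dt}V(v)$ and substituting the last bound produces exactly \eqref{10}. No Gronwall step or further integration is needed at this stage; the inequality is purely pointwise in $t$.

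There is essentially no hard step here; the only point requiring a little care is the well-definedness of the pairing $(h(t),v_-(t)-v_+(t))$, which should be interpreted as the $H^{-1}$\textendash $H^1$ duality pairing (this is consistent with how the $h$-term arose from multiplying $Ah(t)$ by $A^{-1}(v_--v_+)$ and is legitimate because the trajectories under consideration satisfy $v(t)\in H^1$ for a.e. $t$). Once that interpretation is fixed, the argument is a direct and essentially mechanical extension of the proof of Lemma \ref{Lem3.point}.
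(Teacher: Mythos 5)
Your argument is correct and is essentially the paper's own proof: the paper likewise multiplies \eqref{9} by $A^{-1}(v_-(t)-v_+(t))$ to obtain the identity \eqref{3.pro1} (your identity times two) and then applies the pointwise inequality \eqref{8} of Lemma \ref{Lem3.point} with $w=v(t)$. Your remark about interpreting the $h$-term as an $H^{-1}$--$H^{1}$ duality pairing is a reasonable clarification but does not change the argument.
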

Indeed, analogously to \eqref{3.pro}, but using the non-homogeneous equation \eqref{9}, we have
%$$
\begin{equation}\label{3.pro1}
\frac d{dt}V(v(t))=-2(Av(t),v_-(t)-v_+(t))-2(l(t)v(t),v_-(t)-v_+(t))-2(h(t),v_-(t)-v_+(t))
\end{equation}
%$$
and estimating the terms in the right-hand side of this inequality with the help of \eqref{8} with $w=v(t)$, we end up with the desired inequality \eqref{10}.
\par
At the next step, we show that the strong cone condition is robust with respect to perturbations of the cone and this will give us the main technical tool for proving the smoothness of the IM. Namely, for any $\eb\in\R$, we define
%$$
\begin{equation}\label{veb}
V_\eb(\xi):=\eb\|\xi\|^2_{H^{-1}}+V(\xi)=(1+\eb)\|\xi_-\|_{H^{-1}}^2-(1-\eb)\|\xi_+\|^2_{H^{-1}}=(1+\eb)V(\xi)+2\eb\|\xi_+\|^2_{H^{-1}}.
\end{equation}
%$$
\begin{lemma}\label{Lem3.robust} Let equation \eqref{3.gen} satisfy the strong cone condition in the differential form. Then, there exists $\eb_0>0$ such that, for any $0<\eb<\eb_0$, the following inequalities
%$$
\begin{equation}\label{3.difeq}
\frac d{dt} V_\eb(v(t))+(\alpha(t)+\frac12\lambda_1\mu)V_\eb(v(t))\le 0,\ \ \frac d{dt} V_{-\eb}(v(t))+(\alpha(t)-\frac12\lambda_1\mu)V_{-\eb}(v(t))\le 0
\end{equation}
%$$
hold for any solution $v(t)$ of equation \eqref{3.gen}.
\end{lemma}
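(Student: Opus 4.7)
The plan is to prove both inequalities by comparing $\frac{d}{dt}V_{\pm\eb} + (\alpha\pm\beta)V_{\pm\eb}$ (with $\beta := \lambda_1\mu/2$) against the pointwise form of the cone condition from Lemma~\ref{Lem3.point} scaled by the factor $(1\pm\eb)$. The first step is to expand $\frac{d}{dt}V_{\pm\eb}$ by writing $V_{\pm\eb} = (1\pm\eb)\|v_-\|_{H^{-1}}^2 - (1\mp\eb)\|v_+\|_{H^{-1}}^2$ and using the projection identities $\frac{d}{dt}\|v_\pm\|_{H^{-1}}^2 = -2\|v_\pm\|_{H^1}^2 - 2(l(t)v, v_\pm)$, obtained by projecting \eqref{3.gen} onto $H_\pm$ and testing with $A^{-1}v_\pm$.

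The crucial step is then to subtract $(1\pm\eb)$ times the pointwise cone inequality of Lemma~\ref{Lem3.point} applied to $w=v$. This particular scaling is chosen precisely so that the coefficient of $\|v_-\|_{H^1}^2$ in the scaled cone inequality, which equals $-2(1\pm\eb)$ (after using $-2(Av, v_- - v_+) = -2\|v_-\|_{H^1}^2 + 2\|v_+\|_{H^1}^2$), matches exactly the coefficient of $\|v_-\|_{H^1}^2$ in $\frac{d}{dt}V_{\pm\eb}$, so the two cancel. After elementary algebraic simplification, what remains involves only $\|v_+\|_{H^1}^2$, the cross term $(l(t)v, v_+)$, and $\|v_\pm\|_{H^{-1}}^2$ quantities; a short computation shows the $V$-type residual reorganizes cleanly into $\pm(1\pm\eb)\beta\|v_-\|_{H^{-1}}^2 + [\pm 2\eb\alpha \mp (1\mp\eb)\beta]\|v_+\|_{H^{-1}}^2$.

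From here the claim follows by routine estimates. The scaled cone contributes the main dissipative term $-(1\pm\eb)\mu\|v\|_H^2$. For $V_{+\eb}$, the term $(1+\eb)\beta\|v_-\|_{H^{-1}}^2$ is bounded by $\frac{(1+\eb)\mu}{2}\|v\|_H^2$ via $\|v_-\|_{H^{-1}}^2 \le \lambda_1^{-1}\|v_-\|_H^2$ and the definition $\beta = \lambda_1\mu/2$, consuming exactly half of the dissipation; for $V_{-\eb}$ the symmetric bound handles $(1+\eb)\beta\|v_+\|_{H^{-1}}^2$. The $(l(t)v, v_+)$ cross term costs $O(\eb L)\|v\|_H^2$ by Cauchy--Schwarz and $\|l(t)\|\le L$, while the remaining $\|v_+\|_{H^{-1}}^2$-type terms give $O(\eb)\|v\|_H^2$ via $\|v_+\|_{H^{-1}}^2 \le \lambda_1^{-1}\|v_+\|_H^2$. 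Choosing $\eb_0$ so that all these $O(\eb)$ contributions sit strictly below the surviving $\mu/2$-share of dissipation closes the argument.

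The main obstacle, and the only genuine asymmetry between the two cases, is the $\|v_+\|_{H^1}^2$ term that survives the cancellation. In the $V_{+\eb}$ case its coefficient turns out to be $-4\eb$ and may simply be dropped; but in the $V_{-\eb}$ case it has coefficient $+4\eb$, which would be fatal since $\|v\|_{H^1}^2$ is not in general controlled by $\|v\|_H^2$. The saving observation is that $v_+$ lies in the finite-dimensional subspace $H_+$, so the elementary bound $\|v_+\|_{H^1}^2 \le \lambda_N\|v_+\|_H^2$ applies, producing only an $O(\eb\lambda_N)\|v\|_H^2$ contribution that a sufficiently small $\eb_0$ (now depending on $N$) absorbs. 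Without the finite-dimensional spectral projector structure of $v_+$, no comparable bound would be available, and it is precisely this observation that makes the robustness argument go through.
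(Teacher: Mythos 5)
Your proposal is correct and follows essentially the same route as the paper: both combine a suitably scaled copy of the differential cone inequality with the auxiliary energy identity for the perturbation term, drop or absorb the resulting $O(\eb L)$ and $O(\eb\alpha_+\lambda_1^{-1})$ errors into the dissipation $-\mu\|v\|^2_H$, and convert the surviving $\|v\|^2_H$ into $V_{\pm\eb}$ via $\lambda_1$. In particular, the asymmetry you single out --- controlling the $+4\eb\|v_+\|^2_{H^1}$ term by $\|v_+\|^2_{H^1}\le\lambda_N\|v_+\|^2_H$, which forces $\eb_0$ to depend on $N$ --- is exactly the paper's step $(Av,v)\le\lambda_N\|v\|^2_H$ on $H_+$ leading to the condition $4\eb(\lambda_N+L)\le\tfrac14(1-\eb)\mu$.
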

\begin{proof} Let us check the first inequality of \eqref{3.difeq}. Multiplying equation \eqref{3.gen} by $2\eb A^{-1}v$ and using the Lipschitz continuity, we have
%$$
\begin{equation}
\eb\frac d{dt}\|v(t)\|^2_{H^{-1}}\le 2\eb L\|v(t)\|^2_{H}.
\end{equation}
%$$
Taking a sum of this inequality with  \eqref{3.dcone}, using \eqref{3.squeez} and fixing $\eb>0$ to be so small that
$$
(2 L+\alpha_+\lambda_1^{-1})\eb\le\frac14\mu,
$$
we end up with
%$$
\begin{equation}\label{3.triv}
\frac d{dt} V_\eb(v(t))+\alpha(t)V_\eb(v(t))\le -\frac34\mu\|v(t)\|^2_H.
\end{equation}
%$$
Combining this inequality with the obvious estimate
%$$
\begin{equation}\label{3.vhm}
-\lambda_1^{-1}\|\xi\|^2_H\le V(\xi)\le \lambda_1^{-1}\|\xi\|^2_H
\end{equation}
%$$
and assuming that $\eb\le\frac12$, we prove the first formula of \eqref{3.difeq}.
\par
Let us prove the second inequality of \eqref{3.difeq}. Multiplying \eqref{3.gen} by $-4\eb A^{-1}v_+(t)$ and using that $(Av,v)\ge\lambda_N\|v\|^2$ if $v\in H_+$, we get
%$$
\begin{equation}
-2\eb\frac d{dt}\|v_+(t)\|^2\le 4\eb(\lambda_N+L)\|v\|^2_H.
\end{equation}
%$$
Multiplying now inequality \eqref{3.dcone} by $(1-\eb)>0$, taking a sum with the last inequality and fixing $\eb>0$ in such way that
$4\eb(\lambda_N+L)\le \frac14(1-\eb)\mu$, we end up with
%$$
\begin{equation}\label{3.triv1}
\frac d{dt} V_{-\eb}(v(t))+\alpha(t)V_{-\eb}(v(t))\le -\frac34(1-\eb)\mu\|v(t)\|^2_H.
\end{equation}
%$$
Using the inequality \eqref{3.vhm} again and assuming that $\eb<\frac14$, we end up with the desired second estimate of \eqref{3.difeq} and finish the proof of the lemma.
\end{proof}
The next corollary shows that the strong cone condition implies some kind of normal hyperbolicity in the sense that the trajectories outside of the cone squeeze stronger than the trajectories inside of the cone may expand.
\begin{corollary}\label{Cor3.hyp} Let the equation \eqref{3.gen} possesses the strong cone property, $T\in\R_+$ and $v(t)$, $t\in [-T,0]$ be a solution of problem \eqref{3.gen}. Then
\par
1) If $v(-T)\in K^+$, the whole trajectory $v(t)\in K^+$, $t\in[-T,0]$ and the following estimate holds:
%$$
\begin{equation}\label{11}
\|v(t)\|_{H^{-1}}^2\le Ce^{\bar\alpha(t)+\frac12\lambda_1 \mu t}\|v(0)\|^2_{H^{-1}},\ \ t\in[-T,0],
\end{equation}
%$$
where $\bar \alpha(t):=\int_{t}^0\alpha(s)\,ds$ and the constant $C$ is independent of $T$, $t$ and $v$.
\par
2) If $v(0)\notin K^+$, the whole trajectory $v(t)\notin K^+$, $t\in[-T,0]$ and the following estimate holds:
%$$
\begin{equation}
\label{12}
\|v(0)\|_{H^{-1}}^2\le Ce^{-\bar\alpha(t)+\frac12\lambda_1 \mu t}\|v(t)\|^2_{H^{-1}},\ \ t\in[-T,0],
\end{equation}
%$$
where the constant $C$ is independent of $T$, $t$ and $v$.
\end{corollary}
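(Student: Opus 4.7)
The plan is to apply the two perturbed cone inequalities \eqref{3.difeq} of Lemma~\ref{Lem3.robust}, choosing the sign of $\eb$ so that the perturbed quadratic form is comparable to $\|v\|_{H^{-1}}^{2}$ on the side of the cone where $v(t)$ actually lives, and then to integrate the resulting linear differential inequalities backward in time by Gronwall. The trajectory-location statements in (1) and (2) themselves come from the unperturbed inequality \eqref{3.dcone}: Gronwall applied to $\tfrac{d}{dt}V(v(t))+\alpha(t)V(v(t))\le 0$ shows that $K^+=\{V\le 0\}$ is forward invariant, which by contraposition forces $v(t)\in K^+$ on all of $[-T,0]$ in case (1) and $v(t)\notin K^+$ on all of $[-T,0]$ in case (2).

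For the quantitative bound in (1) I would use $V_{-\eb}$. Inside $K^+$, $\|v_-\|_{H^{-1}}^{2}\le\|v_+\|_{H^{-1}}^{2}$ implies $\|v\|_{H^{-1}}^{2}\le 2\|v_+\|_{H^{-1}}^{2}$, and a direct expansion yields
$$-V_{-\eb}(v)=(1+\eb)\|v_+\|_{H^{-1}}^{2}-(1-\eb)\|v_-\|_{H^{-1}}^{2}\in\big[\,\eb\|v\|_{H^{-1}}^{2},\ (1+\eb)\|v\|_{H^{-1}}^{2}\,\big].$$
The second inequality of \eqref{3.difeq} says that $t\mapsto V_{-\eb}(v(t))\exp\!\int_{0}^{t}(\alpha(s)-\tfrac12\lambda_1\mu)\,ds$ is non-increasing; since $V_{-\eb}(v(\cdot))\le 0$ on $K^+$, comparing the values at $t\le 0$ and $0$, and using the identity $\int_t^0(-\tfrac12\lambda_1\mu)\,ds=\tfrac12\lambda_1\mu t$ (valid because $t\le 0$), yields
$$-V_{-\eb}(v(t))\le -V_{-\eb}(v(0))\,e^{\bar\alpha(t)+\tfrac12\lambda_1\mu t},$$
and combining with the two-sided equivalence above produces \eqref{11}.

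Part (2) is the mirror image using $V_\eb$ in place of $V_{-\eb}$. Outside $K^+$ one has $\|v_-\|_{H^{-1}}^{2}>\|v_+\|_{H^{-1}}^{2}$, so an analogous expansion gives $\eb\|v\|_{H^{-1}}^{2}\le V_\eb(v)\le(1+\eb)\|v\|_{H^{-1}}^{2}$, while the first inequality of \eqref{3.difeq} integrated backward from $0$ to $t\le 0$ (now with exponent $\alpha+\tfrac12\lambda_1\mu$) yields $V_\eb(v(0))\le V_\eb(v(t))\,e^{-\bar\alpha(t)+\tfrac12\lambda_1\mu t}$, which together with the equivalence is \eqref{12}. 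The corollary is not genuinely difficult beyond Lemma~\ref{Lem3.robust}: the only points requiring real attention are the sign bookkeeping in the backward-time Gronwall step and the verification of the two algebraic equivalences between $|V_{\pm\eb}(v)|$ and $\|v\|_{H^{-1}}^{2}$ on the two sides of the cone.
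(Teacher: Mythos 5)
Your proposal is correct and follows essentially the same route as the paper: forward invariance of $K^+$ from \eqref{3.dcone} for the location claims, then integration of the two perturbed inequalities \eqref{3.difeq} combined with the two-sided comparison of $-V_{-\eb}$ (resp.\ $V_\eb$) with $\eb\|v\|^2_{H^{-1}}$ and $(1+\eb)\|v\|^2_{H^{-1}}$ on the relevant side of the cone. The sign bookkeeping and the algebraic equivalences are exactly those used in the paper's proof.
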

\begin{proof}Let $v(-T)\in K^+$. Then $V(v(-T))\le 0$ and from the cone condition \eqref{3.dcone} we conclude that $V(v(t))\le0$ for all $t\ge-T$ and $v(t)\in K^+$.
Integrating the second  inequality of \eqref{3.difeq}, we have
$$
V_{-\eb}(0)\le e^{-\bar \alpha (t)-\frac12\lambda_1\mu t}V_{-\eb}(t)
$$
and, therefore, since $V(v(t))\le0$, we obtain
%$$
\begin{multline}
\eb\|v(t)\|^2_{H^-1}\le \eb\|v(t)\|^2_{H^{-1}}-V(v(t))=-V_{-\eb}(v(t))\le\\\le e^{\bar\alpha(t)+\frac12\lambda_1 \mu t}(-V_{-\eb}(v(0))\le (1+\eb)e^{\bar\alpha(t)+\frac12\lambda_1 \mu t}\|v(0)\|^2_H
\end{multline}
%$$
and estimate \eqref{11} is proved.

Let now $v(0)\notin K^+$. Then, from the cone property \eqref{3.dcone} we conclude that $V(v(t))\ge0$ for all $t\in[-T,0]$. Integrating the first inequality of \eqref{3.difeq}, we get
$$
V_{\eb}(v(0))\le e^{-\bar\alpha(t)+\frac12\lambda_1\mu t}V_{\eb}(v(t))
$$
and, therefore, using that $V(v(t))\ge0$, we deduce
%$$
\begin{multline}
\eb\|v(0)\|^2_{H^{-1}}\le \eb\|v(0)\|^2_{H^{-1}}+V(v(0))=V_{\eb}(v(0))\le\\\le e^{-\bar\alpha(t)+\frac12\lambda_1 \mu t}V_{\eb}(v(t))\le (1+\eb)e^{-\bar\alpha(t)+\frac12\lambda_1 \mu t}\|v(t)\|_{H^{-1}}^2
\end{multline}
%$$
and the corollary is proved.
\end{proof}

The next corollary shows that the strong cone property in the differential form implies {\it both} cone and squeezing properties for the solutions of equation \eqref{3.gen}.

\begin{corollary} \label{cor_con_sq}
Let the equation \eqref{3.gen} possess the strong cone property \eqref{3.dcone} and let $v(t)$, $t\in[0,T]$ be a solution of \eqref{3.gen}.  Then the following properties are valid:
\par
1. Cone property (invariance of the cone $K^+$):
%$$
\begin{equation}\label{cone_con1}
v(0)\in K^+ \Rightarrow v(t) \in K^+, \text{ for all } t\ge 0.
\end{equation}
%$$
\par
2. Squeezing property: there exists positive $\gamma$ and $C$ such that
%$$
\begin{equation}\label{3.squeez_pr}
v(T) \not \in K^+ \Rightarrow \|v(t)\|_{H^{-1}} \le C e^{-\gamma t} \|v(0)\|_{H^{-1}}, t\in [0, T],
\end{equation}
%$$
where the constants $\gamma$ and $C$ are independent of $v$ and $T$.
\end{corollary}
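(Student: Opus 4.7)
The strategy is to extract both statements directly from the differential cone inequality \eqref{3.dcone} together with the $V_\eb$-inequalities from Lemma~\ref{Lem3.robust}; no new computations beyond Gronwall-type integrations are needed.

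\textbf{Cone property.} Since $-\mu\|v(t)\|_H^2\le 0$, inequality \eqref{3.dcone} implies $\frac{d}{dt}V(v(t))+\alpha(t)V(v(t))\le 0$. Using $e^{\int_0^t \alpha(s)\,ds}$ as integrating factor gives
\begin{equation*}
V(v(t))\le e^{-\int_0^t\alpha(s)\,ds}V(v(0)),\qquad t\ge 0.
\end{equation*}
Hence if $v(0)\in K^+$, i.e.\ $V(v(0))\le 0$, then $V(v(t))\le 0$, that is, $v(t)\in K^+$, for all $t\ge 0$.

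\textbf{Squeezing property.} Suppose $v(T)\notin K^+$, so $V(v(T))>0$. By the cone property just proved, if $v(t_0)\in K^+$ for some $t_0\in[0,T]$, then $v(T)\in K^+$, a contradiction. Therefore $V(v(t))>0$ and $v(t)\notin K^+$ for all $t\in[0,T]$. In particular, with $\eb>0$ as in Lemma~\ref{Lem3.robust},
\begin{equation*}
V_\eb(v(t))=\eb\|v(t)\|_{H^{-1}}^2+V(v(t))\ge \eb\|v(t)\|_{H^{-1}}^2,\qquad t\in[0,T].
\end{equation*}
Now integrate the first inequality of \eqref{3.difeq} with the integrating factor $e^{\int_0^t(\alpha(s)+\frac12\lambda_1\mu)\,ds}$ and use $\alpha(s)\ge\alpha_->0$ to conclude
\begin{equation*}
V_\eb(v(t))\le e^{-(\alpha_-+\frac12\lambda_1\mu)t}\,V_\eb(v(0)),\qquad t\in[0,T].
\end{equation*}
From the definition \eqref{veb} we have the trivial upper bound $V_\eb(v(0))\le(1+\eb)\|v_-(0)\|_{H^{-1}}^2\le(1+\eb)\|v(0)\|_{H^{-1}}^2$. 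Combining the last two displays yields
\begin{equation*}
\|v(t)\|_{H^{-1}}^2\le \frac{1+\eb}{\eb}\,e^{-(\alpha_-+\frac12\lambda_1\mu)t}\,\|v(0)\|_{H^{-1}}^2,\qquad t\in[0,T],
\end{equation*}
which is \eqref{3.squeez_pr} with $\gamma=\frac12(\alpha_-+\frac12\lambda_1\mu)$ and $C=\sqrt{(1+\eb)/\eb}$, both independent of $v$ and $T$.

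\textbf{Main obstacle.} There is no serious difficulty: the whole argument is an assembly of the lemmas already proved. The only small point to watch is the use of the forward cone invariance (just established) to guarantee $V(v(t))>0$ throughout $[0,T]$, so that the perturbed quadratic form $V_\eb$ dominates $\eb\|v\|_{H^{-1}}^2$ and the integrated $V_\eb$-decay translates into pointwise exponential decay of $\|v(t)\|_{H^{-1}}$.
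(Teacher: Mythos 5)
Your proof is correct and follows essentially the same route as the paper: the cone property is read off directly from \eqref{3.dcone}, and the squeezing property is obtained by integrating the first inequality of \eqref{3.difeq} on $[0,T]$ and using that $V(v(t))>0$ there, which is exactly the content of estimate \eqref{12} of Corollary \ref{Cor3.hyp} transplanted to $[0,T]$ as the paper does. The only difference is that you re-derive that estimate in place rather than citing it; the constants and exponent you obtain match the paper's.
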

\begin{proof} Indeed, the first assertion is an immediate corollary of inequality \eqref{3.dcone}. To verify the squeezing property, it is sufficient to use estimate \eqref{12} on the interval $[0,T]$ instead of $[-T,0]$. This together with inequality \eqref{3.squeez} give
$$
\|v(t)\|_{H^{-1}}^2\le Ce^{-\int_0^t\alpha(s)\,ds-\frac12\lambda_1\mu t}\|v(0)\|^2_{H^{-1}}\le Ce^{-(\alpha_-+\frac12\lambda_1\mu)t}\|v(0)\|_{H^{-1}}^2
$$
and the squeezing property is verified. Thus, the corollary is also proved.
\end{proof}
We are now ready to return to the non-linear equation \eqref{main_eq} and state for it the analogous strong cone condition in the differential form.

\begin{definition}\label{Def3.n-cone} Let the nonlinearity $F(u)$ satisfy the assumptions stated at the beginning of this section. We say that equation \eqref{main_eq} possesses a strong cone property in the differential form if there exist a positive constant $\mu$ and a bounded (Borel measurable) function $\alpha: H\to\R$ such that for every trajectory $u(t)\in H$, $t\in[0,T]$ of equation \eqref{main_eq} and every solution $v(t)$ of the equation of variations
%$$
\begin{equation}\label{3.eqvar1}
\Dt v+A(Av+F'(u(t))v)=0,
\end{equation}
%$$
the following analogue of \eqref{3.dcone} holds
%$$
\begin{equation}\label{3.dcone1}
\frac d{dt}V(v(t))+\alpha(u(t))V(v(t))\le-\mu\|v(t)\|^2_H
\end{equation}
%$$
and the function $\alpha$ satisfies
%$$
\begin{equation}
0<\alpha_-\le\alpha(u)\le\alpha_+<\infty.
\end{equation}
%$$
\end{definition}

The relation between the strong cone conditions for the non-linear equation \eqref{main_eq} and for the equation \eqref{3.dif} for the differences of its solutions is clarified in the following lemma.

\begin{lemma}\label{Lem3.con-con} Let equation \eqref{main_eq} possess the strong cone property in the sense of Definition \ref{Def3.n-cone}. Then, for every two solutions  $u_1(t)$ and $u_2(t)$, $t\in[T,S]$ of equation \eqref{main_eq},  the associated equation \eqref{3.dif} for the differences of solutions also possess the strong cone condition (in the sense of Definition \ref{Def3.diffcon}) with the same constant $\mu$ and with the constant $\alpha$ satisfying
%$$
\begin{equation}
\label{20}
\alpha(t)=\alpha_{u_1,u_2}(t):=\int_0^1\alpha(s u_1(t)+(1-s)u_2(t))\,ds.
\end{equation}
%$$
\end{lemma}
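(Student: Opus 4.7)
The natural route is to pass through the equivalent pointwise reformulation supplied by Lemma~\ref{Lem3.point}, where the whole statement will amount to averaging an inequality along the segment connecting $u_2(t)$ to $u_1(t)$ in $H$. Concretely, I first upgrade Definition~\ref{Def3.n-cone} to a genuinely pointwise statement on $H$: for every $u_0\in H$ and every $w\in H^1$,
\begin{equation}\label{plan-pw}
-2(F'(u_0)w,w_--w_+)-2(Aw,w_--w_+)+\alpha(u_0)(\|w_-\|^2_{H^{-1}}-\|w_+\|^2_{H^{-1}})\le-\mu\|w\|^2_H.
\end{equation}
To see this, given $u_0\in H$ I invoke global well-posedness (Proposition~\ref{Prop1.trivial}) to produce a trajectory $u(t)$ of \eqref{main_eq} with $u(0)=u_0$, observe that the equation of variations \eqref{3.eqvar1} is an instance of \eqref{3.gen} with $l(t)=F'(u(t))$, and apply Lemma~\ref{Lem3.point} to the strong cone inequality \eqref{3.dcone1} along this trajectory at $t=0$.

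Next, for fixed $t$ and arbitrary $w\in H^1$, apply \eqref{plan-pw} with $u_0=su_1(t)+(1-s)u_2(t)$ and integrate in $s\in[0,1]$. The term $-2(Aw,w_--w_+)$ and the quadratic form $\|w_-\|^2_{H^{-1}}-\|w_+\|^2_{H^{-1}}$ are constant in $s$ and pull out unchanged; the first term collapses to $-2(l_{u_1(t),u_2(t)}w,w_--w_+)$ by the integral mean value formula \eqref{3.2}; the coefficient of $V(w)$ becomes exactly $\alpha_{u_1,u_2}(t)$ as in \eqref{20}; and the right-hand side $-\mu\|w\|^2_H$ is $s$-independent. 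The result is precisely the pointwise cone inequality \eqref{8} associated with the linear equation \eqref{3.dif}, and Lemma~\ref{Lem3.point} applied in the reverse direction yields the strong cone condition in differential form \eqref{3.dcone} for every solution of \eqref{3.dif} with coefficient $\alpha(t)=\alpha_{u_1,u_2}(t)$ and the same $\mu$.

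The bounds $0<\alpha_-\le\alpha_{u_1,u_2}(t)\le\alpha_+$ are inherited from $0<\alpha_-\le\alpha(\cdot)\le\alpha_+$ by pointwise integration over $s\in[0,1]$. The only point requiring any thought is the first step — the upgrade of Definition~\ref{Def3.n-cone}, which only postulates the cone inequality \emph{along} trajectories of \eqref{main_eq}, to the truly pointwise statement \eqref{plan-pw}. But because Lemma~\ref{Lem3.point} reduces the condition to a single-time algebraic inequality involving only the value $u(t)$ and an arbitrary test element $w$, and because every $u_0\in H$ arises as the initial datum of some solution, no compatibility across times is needed and the upgrade is immediate. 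Everything else is routine averaging.
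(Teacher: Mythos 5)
Your proposal is correct and follows essentially the same route as the paper: reformulate the strong cone condition pointwise via Lemma~\ref{Lem3.point}, substitute $u=su_1(t)+(1-s)u_2(t)$, integrate in $s$ using \eqref{3.2}, and apply Lemma~\ref{Lem3.point} in reverse. Your explicit justification of the upgrade from the trajectory-based Definition~\ref{Def3.n-cone} to the pointwise inequality (via well-posedness from arbitrary initial data) is a step the paper leaves implicit, but it is the same argument.
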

\begin{proof} Indeed, according to Lemma \ref{Lem3.point}, the strong cone condition for equation \eqref{main_eq} is equivalent to
%$$
\begin{multline}
-2(F'(u)w,w_--w_+)-2(Aw,w_--w_+)+\\+\alpha(u)(\|w_-\|^2_{H^{-1}}-\|w_+\|^2_{H^{-1}})\le-\mu\|w\|^2_H,\ \ \forall w\in H^1
\end{multline}
%$$
and every $u\in H$. Replacing $u=s u_1(t)+(1-s)u_2(t)$ in this inequality and integrating over $s\in[0,1]$, we end up with
%$$
\begin{multline}\label{23}
-2(l_{u_1,u_2}(t)w,w_--w_+)-2(Aw,w_--w_+)+\\+\alpha_{u_1,u_2}(t)(\|w_-\|^2_{H^{-1}}-\|w_+\|^2_{H^{-1}})\le-\mu\|w\|^2_H,\ \ \forall w\in H^1
\end{multline}
%$$
which according to Lemma \ref{Lem3.point} again is equivalent to the strong cone condition for equation \eqref{3.dif} and the lemma is proved.
\end{proof}
We summarize the obtained results in the following theorem which can be considered as the main result of the section.

\begin{theorem}\label{Th3.main} Let the nonlinearity $F(u)$ be globally bounded and satisfy \eqref{3.1} and \eqref{3.2} and let also the associated equation \eqref{main_eq} possess the strong cone condition in the sense of Definition \ref{Def3.n-cone}. Then, equation \eqref{main_eq} possess the $N$-dimensional Lipschitz continuous inertial manifold (see Definition \ref{Def1.IM}).
\end{theorem}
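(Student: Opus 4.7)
The plan is to chain together the results already established in Sections \ref{s2} and \ref{s3}: the strong cone condition in differential form for the nonlinear equation implies the strong cone condition for the linear equation of variations, which in turn implies the invariance-of-cone and squeezing properties for the nonlinear semigroup $S(t)$, after which Theorem \ref{th_in_man} delivers the inertial manifold.

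First I would take two arbitrary solutions $u_1(t)$ and $u_2(t)$ of \eqref{main_eq} on some interval $[0,T]$ and form their difference $v(t) = u_1(t) - u_2(t)$. By the integral mean value identity \eqref{3.2}, this difference satisfies the linear equation \eqref{3.dif} with the bounded operator-valued coefficient $l_{u_1(t),u_2(t)}$, which is a special case of the abstract equation \eqref{3.gen}. Lemma \ref{Lem3.con-con} then transfers the strong cone property assumed for \eqref{main_eq} (in the sense of Definition \ref{Def3.n-cone}) into the strong cone property for this linear equation (in the sense of Definition \ref{Def3.diffcon}), with the same constant $\mu$ and with the averaged exponent $\alpha_{u_1,u_2}(t)$ defined by \eqref{20}. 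Since $\alpha(u)$ is assumed to lie in $[\alpha_-,\alpha_+]$, the averaged function $\alpha_{u_1,u_2}(t)$ automatically satisfies the two-sided bound \eqref{3.squeez} required in Definition \ref{Def3.diffcon}.

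Next, I would invoke Corollary \ref{cor_con_sq} applied to this linear equation. It yields two statements about $v(t)$: if $v(0) \in K^+$ then $v(t) \in K^+$ for all $t \ge 0$, and if $v(T) \notin K^+$ then $\|v(t)\|_{H^{-1}} \le Ce^{-\gamma t}\|v(0)\|_{H^{-1}}$ for $t \in [0,T]$, with constants $\gamma, C$ independent of $T$ and of the pair $u_1,u_2$. Rewriting these in terms of $S(t)u_1(0)$ and $S(t)u_2(0)$ is precisely the cone invariance \eqref{cone_con} and squeezing property \eqref{squeez_pr} of Definition \ref{Def1.con-squeez} for the nonlinear semigroup.

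Finally, having verified the hypotheses of Theorem \ref{th_in_man} for the chosen $N$, I would quote that theorem to conclude the existence of the $N$-dimensional Lipschitz continuous inertial manifold in the sense of Definition \ref{Def1.IM}, completing the proof. There is essentially no obstacle here: the theorem is a bookkeeping statement whose content is to package the equivalence "differential cone condition $\Rightarrow$ integral cone and squeezing properties" proved in Section \ref{s3} together with the abstract existence theorem from Section \ref{s2}. The only point that requires attention is to note that the constants $\mu$, $\alpha_-$, $\alpha_+$, and hence the squeezing rate $\gamma$ and the constant $C$ produced by Corollary \ref{cor_con_sq}, are independent of the particular pair of solutions $u_1,u_2$, so that \eqref{cone_con} and \eqref{squeez_pr} hold uniformly as required.
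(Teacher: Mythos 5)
Your proposal is correct and follows exactly the paper's own argument: Lemma \ref{Lem3.con-con} transfers the strong cone condition to the linear equation \eqref{3.dif} for differences, Corollary \ref{cor_con_sq} then yields the cone and squeezing properties of Definition \ref{Def1.con-squeez}, and Theorem \ref{th_in_man} concludes. Your added remark on the uniformity of the constants in $u_1,u_2$ is a correct and worthwhile clarification, but the route is the same.
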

\begin{proof} Indeed, according to Lemma \ref{Lem3.con-con}, for any two solutions $u_1(t)$ and $u_2(t)$ of equation \eqref{main_eq}, the associated equation for differences of solutions \eqref{3.dif} possesses the strong cone condition. Then, due to Corollary \ref{cor_con_sq}, equation \eqref{3.dif} satisfies the cone and squeezing property. In particular, looking at the solution $v(t)=u_1(t)-u_2(t)$ of this equation, we see that equation \eqref{main_eq} possesses the cone and squeezing property in the sense of Definition \ref{Def1.con-squeez}. Thus, Theorem \ref{th_in_man} is applicable and gives the existence of the desired inertial manifold. This finishes the proof of the theorem.
\end{proof}
\begin{remark}\label{Rem3.smooth} Theorem \ref{Th3.main} simplifies the verification of the conditions for the IM existence. Indeed, according to this theorem, we only need to check {\it one} differential inequality \eqref{3.dcone1} for the equation of variations \eqref{3.eqvar1}. Note also that, analogously to Remark \ref{Rem2.smooth}, it is sufficient to verify all of the conditions for the trajectories $u$ satisfying \eqref{2.important} only.
\end{remark}
To conclude this section, we show that the classical spectral gap condition implies the strong cone condition and, therefore, guarantees the existence of the IM.
\begin{proposition}\label{Prop3.spectral}  Let the nonlinearity $F(u)$ be globally bounded and the following spectral gap condition be satisfied for some $N\in\mathbb N$:
%$$
\begin{equation}
\lambda_{N+1} - \lambda_N > L,
\end{equation}
%$$
where $L$ is a Lipschitz constant of the non-linearity $F$. Then for every two solutions $u_1(t)$ and
$u_2(t)$ of the equation \eqref{main_eq}, the associated
 equation \eqref{3.dif} for the differences possesses the strong cone condition with
%$$
\begin{equation}\label{3.am}
 \alpha(t)=2\alpha := 2\lambda_{N+1} \lambda_N\ \ \text{and}\ \ \mu := 2(\lambda_{N+1} - \lambda_N - L) > 0.
\end{equation}
%$$
Thus, equation \eqref{main_eq} possesses the Lipschitz IM.
\end{proposition}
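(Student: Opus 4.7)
The plan is to reduce the proposition to the pointwise reformulation of the cone condition given by Lemma~\ref{Lem3.point} and verify it by a short calculation that exploits the spectral decomposition of $A$. Specifically, for any two solutions $u_1,u_2$ of \eqref{main_eq}, equation \eqref{3.dif} for their difference is of the form \eqref{3.gen} with $l(t)=l_{u_1,u_2}(t)$ satisfying $\|l(t)\|_{\mathcal L(H,H)}\le L$ thanks to \eqref{3.1}--\eqref{3.2}. By Lemma~\ref{Lem3.point}, the strong cone condition \eqref{3.dcone} is equivalent to the pointwise estimate
\begin{equation*}
-2(l(t)w, w_--w_+) -2(Aw, w_--w_+) + \alpha(\|w_-\|^2_{H^{-1}}-\|w_+\|^2_{H^{-1}}) \le -\mu\|w\|^2_H
\end{equation*}
for all $w\in H^1$, with the prescribed choice $\alpha\equiv 2\lambda_{N+1}\lambda_N$ and $\mu=2(\lambda_{N+1}-\lambda_N-L)$ from \eqref{3.am}. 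So the only real content is to verify this single inequality.

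I would verify it by splitting $w=w_++w_-$ and estimating the three terms separately. For the $A$-term, the orthogonal eigendecomposition gives $-2(Aw,w_--w_+)=-2(Aw_-,w_-)+2(Aw_+,w_+)$, which by the spectral bounds \eqref{est_for_H_+/-} is at most $-2\lambda_{N+1}\|w_-\|_H^2+2\lambda_N\|w_+\|_H^2$. For the $l$-term, Cauchy--Schwarz together with $\|w_--w_+\|_H^2=\|w\|_H^2$ and $\|l(t)\|\le L$ yields $-2(l(t)w,w_--w_+)\le 2L\|w\|_H^2$. For the $\alpha$-term, the elementary spectral inequalities $\|w_-\|^2_{H^{-1}}\le\lambda_{N+1}^{-1}\|w_-\|_H^2$ and $\|w_+\|^2_{H^{-1}}\ge\lambda_N^{-1}\|w_+\|_H^2$ together with the choice $\alpha=2\lambda_{N+1}\lambda_N$ produce $\alpha(\|w_-\|^2_{H^{-1}}-\|w_+\|^2_{H^{-1}})\le 2\lambda_N\|w_-\|_H^2 - 2\lambda_{N+1}\|w_+\|_H^2$. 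Adding the three contributions, the coefficients of $\|w_\pm\|_H^2$ collapse symmetrically to $-2(\lambda_{N+1}-\lambda_N)\|w\|_H^2+2L\|w\|_H^2=-\mu\|w\|_H^2$, which is exactly what is required.

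With the pointwise inequality in hand, the remaining logical structure is essentially automatic: Lemma~\ref{Lem3.point} upgrades it to the differential cone inequality \eqref{3.dcone} for every solution of \eqref{3.dif}, where the constant $\alpha(t)\equiv 2\lambda_{N+1}\lambda_N$ trivially meets \eqref{3.squeez}. Corollary~\ref{cor_con_sq} then converts this into both the cone and the squeezing properties of Definition~\ref{Def1.con-squeez} for the nonlinear semigroup $S(t)$, and Theorem~\ref{th_in_man} delivers the Lipschitz IM. There is no genuine obstacle: the spectral gap hypothesis is used in exactly one place, namely to ensure $\mu>0$ in the final cancellation, and the specific value $\alpha=2\lambda_{N+1}\lambda_N$ is forced by the requirement that the $H^{-1}$-norm weights convert cleanly into $\lambda_N\|w_-\|_H^2$ and $\lambda_{N+1}\|w_+\|_H^2$ so as to match the $A$-term.
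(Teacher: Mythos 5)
Your proposal is correct and follows essentially the same route as the paper: the same choice of $\alpha=2\lambda_N\lambda_{N+1}$ and $\mu=2(\lambda_{N+1}-\lambda_N-L)$, the same mode-by-mode spectral estimates on $H_\pm$, and the same bound $2L\|w\|_H^2$ on the nonlinear term. The only (immaterial) difference is that you verify the pointwise form of Lemma~\ref{Lem3.point} with the $A$-term and the $\alpha$-weighted $H^{-1}$-term estimated separately, whereas the paper differentiates $V$ along solutions directly and bounds the combined operators $(A-\alpha A^{-1})P_N$ and $(\alpha A^{-1}-A)Q_N$ as in \eqref{3.one-est}--\eqref{3.dva-est}.
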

\begin{proof}
Let $v(t)$ be a solution of \eqref{3.dif}. Then,
 multiplying equation for $v(t)$ first by $A^{-1}v_-(t)$, second by $-A^{-1} v_+(t)$ and taking sum of them we obtain:
%$$
\begin{multline}\label{3.cone-est}
\frac{1}{2} \frac{d}{dt}V(v(t)) + \alpha V(v(t)) = ((\alpha A^{-1} -A)v_-,v_-)+  \\ + ((A - \alpha A^{-1})v_+, v_+) - (l_{u_1(t),u_2(t)}v, v_- - v_+).
\end{multline}
%$$
By the definition of $\alpha$ and $\mu$, we have
%$$
\begin{multline}\label{3.one-est}
((A - \alpha A^{-1})v_+, v_+)=\sum_{n=1}^N(\lambda_n-\lambda_N\lambda_{N+1}\lambda_n^{-1})|v_n|^2\le\\\le \sum_{n=1}^N(\lambda_N-\lambda_{N+1})|v_n|^2=-(\lambda_{N+1}-\lambda_N)\|v_+\|^2_H.
\end{multline}
%$$
Analogously,
%$$
\begin{multline}\label{3.dva-est}
((\alpha A^{-1} -A)v_-,v_-)=\sum_{n=N+1}^\infty(\lambda_N\lambda_{N+1}\lambda_n^{-1}-\lambda_n)|v_n|^2\le\\\le \sum_{n=N+1}^\infty(\lambda_N-\lambda_{N+1})|v_n|^2=-(\lambda_{N+1}-\lambda_N)\|v_-\|^2_H.
\end{multline}
%$$
Inserting these estimates to \eqref{3.cone-est} and using that $\|l_{u_1(t),u_2(t)}\|_{\mathcal L(H,H)}\le L$, we have
%$$
\begin{equation}
\frac{1}{2} \frac{d}{dt}V(v(t)) + \alpha V(v(t)) \le -(\lambda_{N+1} -\lambda_N)\|v\|^2_H + L\|v\|^2_H = -\mu \|v\|^2_ H.
\end{equation}
%$$
Thus, the strong cone condition for \eqref{3.dif} is verified and the proposition is proved.
\end{proof}

\section{Smoothness of the inertial manifolds}\label{s3.5}

The aim of this section is to obtain the extra smoothness of the function $\Phi: H_+ \to H_-$ which determine the inertial manifold $\mathcal{M}$ under the assumption that non-linearity $F$ is $C^{1+\delta}(H,H)$ for some $\delta \in (0,1)$, i.e.,
%$$
\begin{equation}\label{3}
\|F(u_1)-F(u_2)-F'(u_1)(u_1-u_2)\|_H\le C\|u_1-u_2\|^{1+\delta}_H, \ \ u_1, u_2 \in H.
\end{equation}
%$$
To be more precise, the main result of this section is the following theorem.
\begin{theorem}\label{Th4.main} Let the assumptions of Theorem \ref{Th3.main} hold and let also the assumption \eqref{3} on $F$  be valid for some $\delta>0$. Then the map $\Phi$ is Frechet differentiable and $C^{1+ \varepsilon}$-smooth for some  $\varepsilon>0$, i.e,
%$$
\begin{equation}\label{28}
\|\Phi(u^1_+)-\Phi(u^2_+)-\Phi'(u^1_+)(u_+^1-u_+^2)\|_{H^{-1}} \le C\|u^1_+-u^2_+\|_H^{1+\eb}.
\end{equation}
%$$
\end{theorem}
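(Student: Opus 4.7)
The natural candidate for the derivative $\Phi'(u_+^0)$ at $u_+^0\in H_+$ is defined through the equation of variations: letting $u(\cdot)\in\mathcal N$ be the unique backward trajectory with $P_N u(0)=u_+^0$, I set $\Phi'(u_+^0)\xi_+:=Q_N\tilde v(0)$, where $\tilde v$ is the unique bounded solution on $(-\infty,0]$ of \eqref{3.eqvar1} along $u(t)$ satisfying $P_N \tilde v(0)=\xi_+$. Existence and uniqueness of $\tilde v$ come by copying Steps~1--2 of the proof of Theorem~\ref{th_in_man} verbatim: \eqref{3.eqvar1} is of the form \eqref{3.gen} with $l(t)=F'(u(t))$ and inherits the strong cone property by Lemma~\ref{Lem3.con-con}, so the linear boundary-value problem with $P_N\tilde v(0)=\xi_+$ and $Q_N\tilde v(-T)=0$ is uniquely solvable by Corollary~\ref{cor_con_sq}, and Corollary~\ref{Cor3.hyp}(1) gives the uniform backward $H^{-1}$-bounds needed to let $T\to\infty$. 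Linearity of $\Phi'(u_+^0)$ is inherited from \eqref{3.eqvar1}, and the $K^+$-inclusion of $\tilde v$ makes it Lipschitz-bounded from $H_+$ to $H_-$.

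To prove \eqref{28}, I fix $u_+^1,u_+^2\in H_+$, let $u^1,u^2\in\mathcal N$ be the corresponding trajectories, let $v$ be the derivative curve along $u^1$ with $P_N v(0)=u_+^1-u_+^2$, and set $w:=u^1-u^2$, $z:=w-v$. Then $P_N z(0)=0$ and $z$ solves the inhomogeneous variational equation
\begin{equation*}
\Dt z+A^2 z+A\bigl(F'(u^1(t))z\bigr)=-Ah(t),\qquad h(t):=F(u^1)-F(u^2)-F'(u^1)w,
\end{equation*}
with $\|h(t)\|_H\le C\|w(t)\|_H^{1+\delta}$ by \eqref{3}. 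Applying the inhomogeneous version of Lemma~\ref{Lem3.robust} (combine Corollary~\ref{Cor3.non} with the absorption argument that produced \eqref{3.triv}, using AM--GM to dominate the cross term $(h,z_--z_+)$ by the $-\mu\|z\|_H^2$ on hand) yields, for all sufficiently small $\eps>0$,
\begin{equation*}
\tfrac{d}{dt}V_\eps(z(t))+\bigl(\alpha(t)+\tfrac12\lambda_1\mu\bigr)V_\eps(z(t))\le C\|h(t)\|_H^2.
\end{equation*}
Since $P_Nz(0)=0$ gives $V_\eps(z(0))=(1+\eps)\|z(0)\|_{H^{-1}}^2$, integrating backwards from $-T$, checking that the boundary term at $-T$ vanishes (using Corollary~\ref{Cor3.hyp}(1), which applies because both $w$ and $v$ belong to $K^+$ for $t\le 0$), and letting $T\to\infty$ gives
\begin{equation*}
\|z(0)\|_{H^{-1}}^2\le C\int_{-\infty}^0 e^{\int_0^t(\alpha(s)+\frac12\lambda_1\mu)\,ds}\|h(t)\|_H^2\,dt.
\end{equation*}

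To close the estimate I must control $\|w(t)\|_H$ on the whole half-line. Corollary~\ref{Cor3.hyp}(1) applied to $w$ gives $\|w(t)\|_{H^{-1}}\le Ce^{\frac12(\alpha_+-\frac12\lambda_1\mu)|t|}\|w(0)\|_{H^{-1}}$, while the $H^s$-smoothing/dissipativity of Proposition~\ref{Prop1.trivial}, bootstrapped using the $C^{1+\delta}$-regularity of $F$, makes $\|u^i(t)\|_{H^s}$ uniformly bounded for $s$ as large as needed. Interpolation then yields $\|w(t)\|_H\le C\|w(t)\|_{H^{-1}}^{\theta}$ for $\theta$ arbitrarily close to $1$, and together with $\|w(0)\|_{H^{-1}}\le C\|u_+^1-u_+^2\|_H$ the integral above is bounded by a constant multiple of $\|u_+^1-u_+^2\|_H^{2(1+\delta)\theta}$ whenever $(1+\delta)\theta(\alpha_+-\tfrac12\lambda_1\mu)<\alpha_-+\tfrac12\lambda_1\mu$. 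Picking $\theta$ inside the window $\bigl(1/(1+\delta),\;(\alpha_-+\tfrac12\lambda_1\mu)/((1+\delta)(\alpha_+-\tfrac12\lambda_1\mu))\bigr)$ delivers \eqref{28} with $1+\eb:=(1+\delta)\theta>1$. The main obstacle is precisely the non-emptiness of this window, i.e.\ the rate-balance $\lambda_1\mu>\alpha_+-\alpha_-$ between the normal-hyperbolicity gap of Corollary~\ref{Cor3.hyp} and the oscillation of $\alpha(t)$; this is automatic when $\alpha$ is constant (as in Proposition~\ref{Prop3.spectral}), and must be arranged by choosing $N$ large in the spatial-averaging construction of Sections~\ref{s4}--\ref{s5}.
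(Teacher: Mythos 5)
Your overall architecture matches the paper's: you define $\Phi'$ through the linearized boundary value problem and the limit $T\to\infty$, you form the defect $z=w-v$ solving the inhomogeneous variational equation with $\|h(t)\|_H\le C\|w(t)\|_H^{1+\delta}$, and you invoke the inhomogeneous cone inequality of Corollary \ref{Cor3.non}. The gap is in the final integration step. You integrate over the whole half-line $(-\infty,0]$ and reduce everything to the worst-case rates $\alpha_\pm$; this forces the rate-balance condition $\lambda_1\mu>\alpha_+-\alpha_-$ (both for the convergence of your integral with $(1+\delta)\theta>1$ and, in fact, for the vanishing of your boundary term at $-T$, since $w=u^1-u^2$ obeys estimate \eqref{11} with the exponent $\bar\alpha_{u_1,u_2}$ of Lemma \ref{Lem3.con-con}, which differs from the exponent $\bar\alpha(t)=\int_t^0\alpha(u^1(s))\,ds$ in your Green's kernel by up to $(\alpha_+-\alpha_-)T$). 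In the paper's main application $\alpha(u)=2\lambda_N\lambda_{N+1}-2\lambda_N a(u)$ with $|a(u)|\le 2L$, so $\alpha_+-\alpha_-$ is of order $L\lambda_N$ and grows with $N$, while $\mu\le\lambda_{N+1}-\lambda_N$ stays bounded; your condition therefore fails, and taking $N$ large makes it worse, not better, contrary to your closing remark. A secondary problem: the interpolation $\|w(t)\|_H\le C\|w(t)\|_{H^{-1}}^{\theta}$ with $\theta$ "arbitrarily close to $1$" is not available — the theory only provides uniform $H^{2-\kappa}$ bounds, giving $\theta=(2-\kappa)/(3-\kappa)$ bounded away from $1$.

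The missing idea is the paper's finite logarithmic time window. Since $\|h(t)\|_H\le C\|v(t)\|_H^{\delta}\|v(t)\|_H$ and $\|v(t)\|_H^{\delta}\le Ce^{-K\delta t/2}\|w_+\|_H^{\delta}$, the perturbation can be absorbed into $-\mu\|v\|_H^2$ only on $[-T,0]$ with $T\sim\frac{2}{K\delta}\ln\frac{\mu}{4C\|w_+\|_H^{\delta}}$; on that window the nonlinear difference inherits the improved decay \eqref{38} with \emph{exactly} the same $\bar\alpha(t)$ as the linear curve, so the exponentials in \eqref{42} cancel identically and no comparison of $\alpha_+$ against $\alpha_-$ ever occurs. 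The boundary term at $-T$ is then \emph{not} required to vanish: because $T$ is logarithmic in $\|w_+\|_H$, it contributes $Ce^{-\lambda_1\mu T}\|w_+\|^2_{H^{-1}}\le C\|w_+\|_{H^{-1}}^{2(1+\eb)}$, which is itself the source of the H\"older gain. Without this truncation your integral $\int_{-\infty}^0$ diverges whenever $(1+\delta)\theta>1$, because $\|h(t)\|_H^2$ grows backwards at rate $\sim(1+\delta)\theta\,\alpha_+$ while the kernel only decays at rate $\sim\alpha_-$.
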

\begin{proof} To verify \eqref{28}, we first need to study the Frechet derivative $\Phi'(u^1_+)$. Following the definition of $\Phi$, it is natural to expect that this derivative is defined as follows:
%$$
\begin{equation}
\label{29}
\Phi'(u^1_+)w_+:=\lim_{T\to\infty} Q_N w_T(0),
\end{equation}
%$$
where $u_+^i\in H_+$, $i=1,2$, and $w_T(t)$, $T>0$, solves
%$$
\begin{equation}
\label{30}
\pt w + A^2w + A F'(u^1(t))w = 0,\ \ w_+\big|_{t=0}=w_+,\ \ w_-\big|_{t=-T}=0.
\end{equation}
%$$
 Here and below $w_+ \in H_+$ and $u^i(t)$, $t\le0$, $i=1,2$, are the solutions of \eqref{main_eq} belonging to the inertial manifold $\mathcal M$ and satisfying $P_Nu^i(0)=u^i_+$.

{\it Step 1.} Well-posedness of $\Phi'(u^1(t))$. The existence of a solution for problem \eqref{30} can be verified exactly as in Theorem \ref{th_in_man} and to define the operator $\Phi'$, we only need to check the existence of the limit \eqref{29}. Indeed, since the trajectory  $w_T \in K^+$, according to Corollary \ref{Cor3.hyp}  and estimate \eqref{11}, we get
%$$
\begin{equation}\label{31}
\|w_T(t)\|_{H^{-1}}^2\le Ce^{\bar\alpha(t)+\frac12\lambda_1 \mu t}\|w_+\|_{H^{-1}}^2.
\end{equation}
%$$
Here $\bar\alpha(t)=\int_t^0\alpha(u^1(s))\,ds$ and $\mu$ is the same as in the strong cone inequality.
Consider now another approximation $w_{T_1}(t)$, $T_1\ge T$, and their difference $w_{T,T_1}(t):=w_{T_1}(t)-w_T(t)$. This trajectory does not belong to the cone $K^+$ at $t=0$ and, therefore, it is not in $K^+$ for all $t\in[-T,0]$. Using now \eqref{31} together with estimate \eqref{12} of Corollary \ref{Cor3.hyp}, we end up with
%$$
\begin{multline}\label{32}
\|w_{T,T_1}(0)\|^2_{H^{-1}}\le Ce^{-\bar\alpha(-T)-\frac12 \lambda_1 \mu T}\|w_{T,T_1}(-T)\|^2_{H^{-1}}\le\\\le C_1e^{-\bar\alpha(-T)-\frac12 \lambda_1 \mu T}(\|w_T(-T)\|^2_{H^{-1}}+\|w_{T_1}(T)\|^2_{H^{-1}})\le\\\le
C_2e^{- \lambda_1 \mu T}(\|w_T(0)\|^2_{H^{-1}}+\|w_{T_1}(0)\|^2_{H^{-1}})\le C_3e^{-\lambda_1\mu T}\|w_+\|^2_{H^{-1}}.
\end{multline}
%$$
Thus, $w_T(0)$ is a Cauchy sequence and the limit \eqref{29} exists and, therefore, the operator $\Phi'(u^1(t))$ is well-defined. Moreover, according to \eqref{31}, we have the following estimate for the limit function $w(t)$:
%$$
\begin{equation}\label{33}
\|w(t)\|_{H^{-1}}^2\le Ce^{\bar\alpha(t)+\frac12\lambda_1 \mu t}\|w_+\|^2_{H^{-1}},\ \ w_+\in H_+,\ \ t\le0.
\end{equation}
%$$

{\it Step 2.} Estimate for the difference $v(t):=u^1(t)-u^2(t)$.

Let $u^1(t)$ and $u^2(t)$ be two trajectories on the inertial manifold defined by the limit \eqref{lim} which correspond to the initial data $u^1_+\in H_+$ and $u_2^+\in H_+$ respectively and $w_+: = u_1^+ - u_2^+$. Since $v(t)\in K^+$ then, due to estimate \eqref{11} and the assumption that the exponent $\alpha(t)$ is globally bounded, we have
%$$
\begin{equation}\label{34}
\|v(t)\|_{H^{-1}}^2\le Ce^{-Kt}\|w_+\|^2_{H^{-1}},\ \ t\in \R_-.
\end{equation}
%$$
Our aim at this step is to improve \eqref{34} and to obtain the estimate which is analogous to \eqref{33}. To this end, we note that
the function $v$ solves the equation
%$$
\begin{equation}
\label{35}
\Dt v+A^2v + A F'(u^1(t))v + A [l_{u_1(t),u_2(t)}-F'(u_1(t))]v(t)=0,
\end{equation}
%$$
where $l_{u_1(t),u_2(t)}:=\int_0^1F'(u_1(t)+s v(t))\,ds$. Since $F$ satisfies \eqref{3} and $v(t)\in K^+$, we have
%$$
\begin{equation}\label{36}
\|l_{u_1(t),u_2(t)}-F'(u_1(t))\|_{\Cal L(H,H)}\le C\|v(t)\|_{H}^{\delta}\le Ce^{-K\delta t/2}\|w_+\|^\delta_H, \ \ t\in(-\infty,0].
\end{equation}
%$$
Thus, treating equation \eqref{35} as a non-homogeneous problem in the form of \eqref{9} with the right-hand side  $h(t):=[l_{u_1(t),u_2(t)}-F'(u_1(t))]v(t)$ and according to \eqref{10} and \eqref{3.difeq} (see also \eqref{3.triv1}), we get that, for a sufficiently small $\eb>0$, the following estimate holds:
%$$
\begin{multline}\label{37}
\frac d{dt}V_{-\eb}(v(t))+(\alpha(u^1(t))-\frac12\lambda_1\mu)V_{-\eb}(v(t))\le\\\le-\frac\mu4\|v(t)\|^2_H+C\|h(t)\|_H\|v(t)\|_H\le
(-\frac\mu4+Ce^{-K\delta t/2}\|w_+\|_H^\delta)\|v(t)\|_H^2\le 0
\end{multline}
%$$
if $-t\le\frac2{K\delta}\ln\frac {\mu}{4C \|w_+\|_H^{\delta}}$. Thus, since $v\in K^+$, analogously to \eqref{11}, we end up with the following estimate:
%$$
\begin{equation}
\label{38}
\|v(t)\|_{H^{-1}}^2\le Ce^{\bar\alpha(t)+\frac12\lambda_1 \mu t}\|w_+\|^2_{H^{-1}},\ \ w_+\in H_+,\ \ t\in[-T,0],\ \ T=\frac2{K\delta}\ln\frac {\mu}{4C\|w_+\|_H^{\delta}}
\end{equation}
%$$
which differs from \eqref{33} only by the presence of the lower bound for $t$.
\par
{\it Step 3.} Applying the parabolic smoothing property. Up to the moment, we have obtained estimates \eqref{33} and \eqref{38} for the $H^{-1}$ norms of the functions $v(t)$ and $w(t)$ only, but we need to control more regular norms of these functions in the sequel. To get this control, we remind that, analogously to Proposition \ref{Prop1.trivial}, we have the following parabolic smoothing property for the solutions of problem \eqref{30}:
%$$
\begin{equation}
\|w(t+1)\|_{H^{2-\kappa}}\le C_\kappa \|w(t)\|_{H^{-1}},
\end{equation}
%$$
where $\kappa>0$ is arbitrary and the constant $C_\kappa$ depends only on $\kappa$. Combining this estimate with \eqref{33}, we get
%$$
\begin{equation}\label{4.33}
\|w(t)\|_{H^{2-\kappa}}^2\le C_\kappa e^{\bar\alpha(t)+\frac12\lambda_1 \mu t}\|w_+\|^2_{H^{-1}},\ \ w_+\in H_+,\ \ t\le0.
\end{equation}
%$$
Analogously, applying the parabolic smoothing property to equation \eqref{3.dif} and using \eqref{38}, we get
%$$
\begin{equation}
\label{4.38}
\|v(t)\|_{H^{2-\kappa}}^2\le C_\kappa e^{\bar\alpha(t)+\frac12\lambda_1 \mu t}\|w_+\|^2_{H^{-1}},\ \ w_+\in H_+,\ \ t\in[-T,0]
\end{equation}
%$$
with $T=\max\{0,\frac2{K\delta}\ln\frac {\mu}{4C\|w_+\|_H^{\delta}}-1\}$.
\par
{\it Step 4.} Estimate for $\theta(t):= w(t)-v(t)$. This function solves the following equation:
%$$
\begin{equation}
\label{39}
\pt\theta+A^2 \theta + A (F'(u^1(t))\theta)+ A h(t)=0.
\end{equation}
%$$
Then, combining  \eqref{3.difeq} and \eqref{10}, for sufficiently small $\eb>0$, we have
%$$
\begin{equation}
\label{40}
\frac d{dt}V_\eb(\theta(t))+(\alpha(u^1(t))+\frac12\lambda_1\mu)V_\eb(\theta(t))\le C\|h(t)\|_H\|\theta(t)\|_H
\end{equation}
%$$
and using that $\theta(0)\not\in K^+$, analogously to \eqref{12}, we have
%$$
\begin{equation}\label{41}
\|\theta(0)\|^2_{H^{-1}}\le Ce^{-\bar\alpha(-T)+\frac12\lambda_1\mu T}\|\theta(-T)\|^2_{H^{-1}}+
C\int_{-T}^0e^{-\bar\alpha(t)+\frac12\lambda_1 \mu t}\|h(t)\|_H\|\theta(t)\|_H\,dt.
\end{equation}
%$$
Assume now that $T=\max\{0,\frac2{K\delta}\ln\frac {\mu}{4C\|w_+\|_H^{\delta}}-1\}$ and using estimates \eqref{4.33}, \eqref{34} and \eqref{4.38} (with $\kappa=2$) as well as \eqref{36}, we finally arrive at
%$$
\begin{multline}\label{42}
\|\theta(0)\|^2_{H^{-1}}\le Ce^{-\bar\alpha(-T)+\frac12 \lambda_1 \mu T}(\|v(-T)\|^2_{H^{-1}}+\|w(-T)\|^2_{H^{-1}})+\\+
C\int_{-T}^0e^{-\bar\alpha(t)+\frac12\lambda_1 \mu t}\|v(t)\|_H^\delta\|v(t)\|_H(\|w(t)\|_H+\|v(t)\|_H)\,dt\le
\\\le
Ce^{-\lambda_1\mu T}\|w_+\|^2_{H^{-1}}+C\|w_+\|_{H^{-1}}^{2+\delta/2}\int_{-T}^0e^{-(-\lambda_1\mu+K\delta/2) t}\,dt.
\end{multline}
%$$
Decreasing the exponent $\delta$ if necessary, we may assume that $-\lambda_1\mu+K\delta/2\le-\lambda_1\mu/2$ and therefore
%$$
\begin{equation}
\label{43}
\|\theta(0)\|^2_{H^{-1}}\le Ce^{-\lambda_1\mu T}\|w_+\|^2_{H^{-1}}+C\|w_+\|_{H^{-1}}^{2+\delta/2}\le C\|w_+\|^{2(1+\eb)}_{H^{-1}}
\end{equation}
%$$
for some $\eb=\eb(\delta,\mu)>0$. Thus, the desired estimate \eqref{28} is proved and the theorem is also proved.
\end{proof}

\begin{remark}\label{Rem4.good} Applying the parabolic smoothing property to the equation for $\theta(t)$, it is not difficult to verify the stronger version of estimate \eqref{28}, namely
%$$
\begin{equation}\label{4.28}
\|\Phi(u^1_+)-\Phi(u^2_+)-\Phi'(u^1_+)(u_+^1-u_+^2)\|_{H^{2-\kappa}} \le C_\kappa\|u^1_+-u^2_+\|_H^{1+\eb},
\end{equation}
%$$
where $\kappa>0$ is arbitrary and $C_\kappa$ depends only on $\kappa$. Moreover, as follows from the proof, estimate \eqref{3} is actually used for $u_1$ and $u_2$ satisfying \eqref{2.important} only and can be replaced by
%$$
\begin{equation}\label{4.3}
\|F(u_1)-F(u_2)-F'(u_1)(u_1-u_2)\|_H\le C\|u_1-u_2\|^{\delta}_{H^{2-\kappa}}\|u_1-u_2\|_{H}, \ \ u_1, u_2 \in H^{2-\kappa},
\end{equation}
%$$
for some $\kappa\in(0,2]$. As we will see below, assumption \eqref{4.3} is much easier to verify in applications than the initial assumption \eqref{3} which is more natural for the abstract theory.
\par
Note also that the result of Theorem \ref{Th4.main} is in a sense optimal since the typical regularity of the inertial manifolds is exactly $C^{1+\eb}$ for some small $\eb>0$. The further regularity ($C^2$ or more) requires essentially stronger spectral gap assumptions which are usually satisfied only in the case of small Lipschitz constant $L$, see \cite{kok,Zel} for more details.
\end{remark}

\section{Spatial averaging: an abstract scheme}\label{s4}

In this section, we adapt the method of spatial averaging developed in \cite{mal-par} to the class of abstract Cahn-Hilliard equations \eqref{main_eq}. To this end, we first need to introduce some projectors.
\par
Let $N \in \N$ and $k > 0$ be such that $\lambda_N > k$. Then,
%$$
\begin{equation}
P_{k,N}u:=\sum_{n:\, \lambda_n < \lambda_N -k}(u,e_n)e_n, \ \ Q_{k,N}u := \sum_{n:\, \lambda_n > \lambda_{N} +k}(u,e_n)e_n,
\end{equation}
%$$
and
%$$
\begin{equation*}
R_{k,N}u := \sum_{n:\, \lambda_N - k \le \lambda_n \le \lambda_{N} +k}(u,e_n)e_n.
\end{equation*}
%$$
As has been observed in \cite{mal-par} (at least on the level of reaction-diffusion equations, see also \cite{Zel}), the spectral gap condition is actually used only  for the control the norm of the "intermediate" part $R_{k,N}\circ F'(u)\circ R_{k,N}$ of the derivative $F'(u)$ where $k\sim L^2$.
Moreover, if this intermediate part is close to the scalar operator then the spectral gap condition may be relaxed.
The following theorem adapts this result to the case of the Cahn-Hilliard equations.

\begin{theorem}\label{th_manifold_ex}
Let the function $F$ be globally Lipschitz with the Lipschitz constant $L$, globally bounded and differentiable  and let the number $N$ be such that
%$$
\begin{equation}\label{est_middle_terms}
\|R_{k,N}\circ F'(u)\circ R_{k,N}v - a(u) R_{k,N}v\|_H \le \delta \|v\|_H, \ \ u, v \in H,
\end{equation}
%$$
where $a(u)\in \R$ is a scalar depending on $u$ and $\delta < L$. Assume also that
%$$
\begin{equation}\label{assump}
\frac{\theta}{2} >  \delta+\frac {2L k}{\lambda_N-k}+\frac{2L^2}{k-4L}+\frac{2 L^2\lambda_N }{(2 \lambda_N - k)k - 4L \lambda_{N}}, \ \  \lambda_N > 2L,
\end{equation}
%$$
as before $\theta = \lambda_{N+1} - \lambda_N$ and $k$ is chosen in such a way that
%$$
\begin{equation}
 (2\lambda_N - k)k> 4L \lambda_{N},\ \ k >4L.
\end{equation}
%$$
Then, equation \eqref{main_eq} possesses the strong cone property in the differential form and, consequently, there exists a Lipschitz N-dimensional inertial manifold for this equation.
\end{theorem}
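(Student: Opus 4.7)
The strategy is to verify the strong cone condition in the differential form (Definition~\ref{Def3.n-cone}) for the equation of variations $\pt v + A^2 v + A F'(u)v = 0$ and then invoke Theorem~\ref{Th3.main}. Following the test-function computation from Proposition~\ref{Prop3.spectral}, pairing the equation with $A^{-1}(v_- - v_+)$ produces the identity
\begin{equation*}
\tfrac12 \tfrac d{dt} V(v) + \alpha V(v) = ((\alpha A^{-1} - A)v_-, v_-) + ((A - \alpha A^{-1})v_+, v_+) - (F'(u)v, v_- - v_+),
\end{equation*}
so the task reduces to choosing a bounded positive $\alpha(u)$ and bounding the right-hand side by $-\mu\|v\|_H^2$.

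I would choose $\alpha(u)$ of the form $\alpha(u) = 2\lambda_N\lambda_{N+1} + c\,a(u)$ for a suitable linear-in-$\lambda_N,\lambda_{N+1}$ coefficient $c$ so that the scalar part of $F'(u)=a(u)I + B(u)$ is absorbed into the two quadratic forms via $(a(u)v,v_--v_+)=a(u)(\|v_-\|_H^2-\|v_+\|_H^2)$; boundedness of $\alpha(u)$ is immediate from $|a(u)|\le L$. The resulting diagonal expression $\sum_n c_n(u)|v_n|^2$ I would analyse through the tripartite splitting $v=P_{k,N}v+R_{k,N}v+Q_{k,N}v$. On the low block $\lambda_n\le \lambda_N-k$ the coefficient $c_n(u)$ is strongly negative of order $\lambda_N k/(\lambda_N-k)$, and analogously $\lambda_N k/(\lambda_N+k)$ on the high block; on the middle block $\lambda_N-k\le \lambda_n\le \lambda_N+k$ the only gain is of order $\theta$. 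The side hypotheses $k>4L$ and $(2\lambda_N-k)k>4L\lambda_N$ are exactly what keeps the denominators in the induced estimates positive after the Lipschitz correction $|a(u)|\le L$.

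The heart of the argument is then to estimate the off-diagonal remainder $(B(u)v,v_--v_+)$. I would split
\begin{equation*}
B(u) = R_{k,N} B(u) R_{k,N} + (I - R_{k,N}) B(u) + R_{k,N} B(u) (I - R_{k,N}).
\end{equation*}
The first, genuinely middle-to-middle piece is bounded directly by the spatial averaging assumption~\eqref{est_middle_terms}, contributing the $\delta\|v\|_H^2$ term appearing in \eqref{assump}. The remaining two pieces always carry a factor $I - R_{k,N} = P_{k,N} + Q_{k,N}$ on one side; pairing with $v_- - v_+$ extracts precisely the low or high components $-P_{k,N}v$ or $Q_{k,N}v$ on which the diagonal reserve is large. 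Using $\|B(u)\|_{\mathcal{L}(H,H)}\le 2L$ together with Young's inequality applied with three independent weights (one each for the low/high cross, the middle/low and middle/high crosses, and the low/middle and high/middle crosses), each cross-term can be absorbed into the relevant diagonal block, and the weighted balance produces respectively the terms $\frac{2Lk}{\lambda_N-k}$, $\frac{2L^2}{k-4L}$ and $\frac{2L^2\lambda_N}{(2\lambda_N-k)k-4L\lambda_N}$. Collecting everything and invoking hypothesis~\eqref{assump} leaves a strictly positive surplus $\mu>0$ so that $\tfrac d{dt}V(v)+\alpha(u)V(v)\le -\mu\|v\|_H^2$; Theorem~\ref{Th3.main} then furnishes the desired Lipschitz $N$-dimensional inertial manifold.

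The delicate point I expect will be the bookkeeping in the last paragraph: matching each of the three cross-term types to a diagonal reserve block with just enough excess to absorb it, while keeping the Young weights compatible with both side conditions $k>4L$ and $(2\lambda_N-k)k>4L\lambda_N$ and with the precise fractional form of~\eqref{assump}. Mismatched weights would either inflate one of the denominators, waste the diagonal reserve of the low/high blocks, or force an artificially small $\delta$; conversely, the exact form of~\eqref{assump} is essentially dictated by this bookkeeping, which is what makes the hypotheses of the theorem tight within this method.
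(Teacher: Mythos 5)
Your proposal follows essentially the same route as the paper: reduce to the strong cone inequality via Theorem~\ref{Th3.main}, write the identity for $\frac12\frac d{dt}V+\alpha V$, split the spectrum into the three blocks $P_{k,N}$, $R_{k,N}$, $Q_{k,N}$, use the spatial averaging hypothesis on the middle block, absorb the scalar part into a $u$-dependent exponent $\alpha(u)=2\lambda_N\lambda_{N+1}-2\lambda_N a(u)$, and kill the cross terms with Young's inequality against the diagonal reserves $k\|Q_{k,N}w\|_H^2$ and $(2\lambda_N-k)k\|P_{k,N}w\|_{H^{-1}}^2$. The only slight misattribution is that the term $\frac{2Lk}{\lambda_N-k}$ in \eqref{assump} comes not from a low/high cross term but from the error made when converting the $H$-norms of the middle-band components $\tilde P_{k,N}w$, $\tilde Q_{k,N}w$ into $\lambda_N$ times their $H^{-1}$-norms so that $a(u)$ multiplies $V(w)$ exactly; otherwise the bookkeeping you anticipate is precisely the paper's.
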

\begin{proof}
Due to Theorem \ref{Th3.main}, we know that in order to prove the existence of inertial manifold  it is sufficient to check the validity of the strong cone inequality \eqref{3.dcone1}  for the equation of variations  \eqref{3.eqvar1} associated with the solution $u(t)$  of the Cahn-Hilliard equation \eqref{main_eq}.

To this end, we first  need the following estimate for the norm of $P_{k,N}w$ in $H^{-1}$ (here and below $\alpha=\lambda_N\lambda_{N+1}$):
%$$
\begin{multline} \label{P_N,k}
( (\alpha A^{-1} -A)w_+, w_+)=\sum_{n=1}^N(\lambda_N\lambda_{N+1}\lambda_n^{-1}-\lambda_n)|w_n|^2\ge\\\ge\sum_{n:\, \lambda_n<\lambda_N-k}(\lambda_N\lambda_{N+1}-\lambda_n^2)\lambda_n^{-1}|w_n|^2\ge\\\ge(\lambda_N^2-(\lambda_N-k)^2)\|P_{k, N} w\|^2_{H^{-1}}=(2 \lambda_N k - k^2)\|P_{k, N} w\|^2_{H^{-1}}
\end{multline}
%$$
and the similar estimate for the norm of $Q_{k,N}v$ in $H$
%$$
\begin{multline}\label{Q_N,k}
((A - \alpha A^{-1})w_-, w_-)=\sum_{n=N+1}^\infty(\lambda_n-\lambda_N\lambda_{N+1}\lambda_n^{-1})|w_n|^2\ge\\\ge\sum_{n:\,\lambda_n>\lambda_N+k}
(\lambda_n-\lambda_N\lambda_{N+1}\lambda_n^{-1})|w_n|^2\ge(\lambda_N+k-\lambda_N)\|Q_{k,N} w\|^2_H=k\|Q_{k,N} w\|^2_H.
\end{multline}
%$$
Let $w(t)$ be a solution of the equation of variations \eqref{3.eqvar1}. Then, arguing analogously to the derivation of
\eqref{3.cone-est} but using estimates \eqref{P_N,k} and \eqref{Q_N,k} together with \eqref{3.one-est} and \eqref{3.dva-est}, we get:
%$$
\begin{multline}\label{b_est_l(t)}
\frac{1}{2}\frac{d}{dt}V(w(t)) + \alpha V(w(t)) = ((\alpha A^{-1} -A)w_-,w_-) + ((A - \alpha A^{-1})w_+, w_+)- \\ -(F'(u)w, w_- - w_+) \le  -\frac{\theta}{2}\|v\|^2_H + \frac{1}{2}(((\alpha A^{-1} -A)w_-,w_-) + ((A - \alpha A^{-1})w_+, w_+)) - \\ - (F'(u)w, w_- - w_+) \le - \frac{\theta}{2}\|w\|^2_H - \frac{k}{2}\|Q_{k, N}w\|^2_H - \frac{(2 \lambda_N - k)k}{2} \|P_{k, N}w\|^2_{H^{-1}} - (F'(u)w, w_- - w_+).
\end{multline}
%$$
Estimating the last term, we have:
%$$
\begin{multline}\label{est_l(t)v,v--v+}
(F'(u)w, w_- - w_+) = (R_{k,N}\circ F'(u)w, w_- - w_+) +\\+((P_{k,N} + Q_{k,N})\circ F'(u)w, w_- -w_+)  = (R_{k,N}\circ F'(u) \circ R_{k,N}w, w_- - w_+) +\\+ (F'(u)w, Q_{k,N}w - P_{k,N}w) + (F'(u)\circ (Q_{k,N}w + P_{k,N}w), w_- - w_+) \ge \\ \ge (R_{k,N}\circ F'(u) \circ R_{k,N}w, w_- - w_+) - 2 L \|w\|_H (\|Q_{k,N}w\|_H +\|P_{k, N}w \|_{H})\ge\\\ge
(R_{k,N}\circ F'(u) \circ R_{k,N}w, w_- - w_+) - 2 L \|w\|_H (\|Q_{k,N}w\|_H + \lambda_N^{1/2}\|P_{k, N}w \|_{H^{-1}}).
\end{multline}
%$$
It would be convenient to define two more spectral projectors:
%$$
\begin{equation}
\tilde{P}_{k,N}w := \sum_{n:\, \lambda_N - k \le \lambda_n \le \lambda_N}(w,e_n)e_n \ \ \ \text{ and }\ \ \ \tilde{Q}_{k,N}w := \sum_{n:\, \lambda_{N+1} \le \lambda_n \le \lambda_{N} + k}(w,e_n)e_n.
\end{equation}
%$$
Then, obviously
%$$
\begin{multline}
\bigg|\lambda_N\|\tilde {P}_{k,N}v\|_{H^{-1}}^2-\|\tilde{P}_{k,N} v\|_H^2\bigg|\le\\\le
 \sum_{n:\, \lambda_N - k \le \lambda_n \le \lambda_N}|\lambda_N-\lambda_n|\lambda_n^{-1}|w_n|^2\le k\|\tilde {P}_{k,N}v\|_{H^{-1}}^2\le \frac{k}{\lambda_N-k}\|\tilde {P}_{k,N}v\|_{H}^2.
\end{multline}
%$$
and, analogously,
%$$
\begin{multline}
\bigg|\lambda_N\|\tilde {Q}_{k,N}v\|_{H^{-1}}^2-\|\tilde{Q}_{k,N} v\|_H^2\bigg|\le\\\le \sum_{n:\, \lambda_{N+1} \le \lambda_n \le \lambda_{N} + k}|\lambda_N\lambda_n^{-1}-1||w_n|^2\le \frac{k}{\lambda_N+k}\|\tilde{Q}_{k,N} v\|^2_H.
\end{multline}
%$$

Then, using  assumption \eqref{est_middle_terms}, we obtain:
%$$
\begin{multline}
(R_{k,N}\circ F'(u) \circ R_{k,N}w, w_- - w_+) \ge a(u)(\|\tilde{Q}_{k,N}w\|^2_H - \|\tilde{P}_{k,N}w \|^2_H) - \delta\|w\|^2_H \ge \\
\ge \lambda_N a(u)(\|\tilde{Q}_{k,N}w\|^2_{H^{-1}} - \|\tilde{P}_{k,N}w \|^2_{H^{-1}}) - \delta\|w\|^2_H -\\-|a(u)|\(\frac k{\lambda_N-k}\|\tilde {P}_{k,N}v\|_{H}^2+\frac{k}{\lambda_N+k}\|\tilde{Q}_{k,N} v\|^2_H\)
\ge \lambda_N a(u) V(w(t)) -\\ -  |a(u)|(\lambda_N\|P_{k,N}w\|^2_{H^{-1}} + \frac{\lambda_N}{\lambda_{N}+k}\|Q_{k,N}w\|^2_{H}) -
 (\delta+\frac k{\lambda_N-k}|a(u)|) \|w\|^2_H.
\end{multline}
%$$
Substituting this result into \eqref{est_l(t)v,v--v+} and using obvious fact that $|a(u)| \le L + \delta \le 2L$, we get:
%$$
\begin{multline} \label{5.est}
-(F'(u)w, w_- - w_+)\le -\lambda_N a(u) V(w) + 2 L( \lambda_{N}\|P_{k,N}w\|^2_{H^{-1}} + \|Q_{k,N}w\|^2_{H}) + \\
+2L\|w\|_H(\lambda_N^{1/2}\|P_{k,N}w\|_{H^{-1}} + \|Q_{k,N}w\|_{H})+(\delta+\frac {2L k}{\lambda_N-k}) \|w\|^2_H.
\end{multline}
%$$
Therefore, due to the Young inequality,
%$$
\begin{multline}\label{5.est1}
- \frac{\theta}{2}\|w\|^2_H - \frac{k}{2}\|Q_{k, N}w\|^2_H - \frac{(2 \lambda_N - k)k}{2} \|P_{k, N}w\|^2_{H^{-1}} - (F'(u)w, w_- -w_+)\le\\\le-\(\frac\theta2-\delta-\frac {2L k}{\lambda_N-k}\)\|w\|^2_H-\(\frac{(2 \lambda_N - k)k}{2}-2L\lambda_{N}\) \|P_{k,N}w\|^2_{H^{-1}}-\(\frac\kappa2-2L\)\|Q_{k,N}w\|^2_H+\\+2L\|w\|_H\(\lambda_N^{1/2}\|P_{k,N}w\|_{H^{-1}}+ \|Q_{k,N}w\|_{H}\)\le\\\le
-\(\frac\theta2-\delta-\frac {2L k}{\lambda_N-k}-\frac{2L^2}{\kappa-4L}-\frac{2 L^2\lambda_N }{(2 \lambda_N - k)k - 4L \lambda_{N}}\)\|w\|^2_H=-\frac\mu2\|w\|^2_H,
\end{multline}
%$$
where
$$
\frac\mu2:=\frac\theta2-\delta-\frac {2L k}{\lambda_N-k}-\frac{2L^2}{\kappa-4L}-\frac{2 L^2\lambda_N }{(2 \lambda_N - k)k - 4L \lambda_{N}}.
$$
Finally, inserting estimate \eqref{5.est1} into the right-hand side of \eqref{b_est_l(t)} and taking into account assumptions \eqref{assump} we see that the differential cone inequality \eqref{3.dcone1} is satisfied with the above $\mu$ and with
%$$
\begin{equation}
\alpha(u)= 2\alpha - 2\lambda_N a(u)> 2\lambda_{N}(\lambda_{N+1} - 2L)> 0.
\end{equation}
%$$
Thus, the desired strong cone condition  \eqref{3.dcone1} is proved and due to Theorem \eqref{Th3.main} equation \eqref{main_eq} possesses an $N-$dimensional inertial manifold.
\end{proof}

\begin{remark} The typical situation to apply the above proved theorem is when, for sufficiently small $\delta>0$ and any $k$ there exists an infinite sequence of $N\in\Bbb N$ such that
$$
\lambda_{N+1}-\lambda_N\ge\rho>0
$$
($\rho$ is independent of $N$ and $k$) such that the spatial averaging assumption \eqref{est_middle_terms} hold for every such $N$.  Then, for very large $N$, the main condition \eqref{assump} reads
%$$
\begin{equation}\label{5.spa-inf}
\frac\rho2>\delta+\frac{2L^2}{k-4L}+\frac{L^2}{k-2L}
\end{equation}
%$$
and we see that it is indeed satisfied if $\delta$ is small enough (say, $\delta<\frac\rho4$) and $k=k(\rho,L)$ is large enough (say,
$k=4L+\frac{12L^2}\rho$). This gives the existence of the desired inertial manifold for these large $N$s.
\end{remark}

As in the case of reaction-diffusion equations, see \cite{mal-par,Zel}, estimate  \eqref{est_middle_terms} is
too restrictive since the constant $\delta$ is uniform with respect to $u\in H$ and in applications it usually depends on the higher norms of $u$. Namely, similar to \cite{mal-par,Zel}, we give the following definition.

\begin{definition}\label{def_spat_av} We say that the non-linearity $F:H\to H$ satisfies the {\it spatial} averaging condition if it is globally bounded, Lipschitz continuous, differentiable in the sense that the mean value theorem \eqref{3.2} holds  and  there exist a positive exponent $\kappa$ and a positive constant $\rho$ such that for every $\delta > 0$, $R>0$ and $k >0$ there exists infinitely many values $N \in \N$ satisfying
%$$
\begin{equation}\label{5.agap}
\lambda_{N+1} - \lambda_N \ge \rho
\end{equation}
%$$
and
%$$
\begin{equation}\label{5.spa}
\sup_{\|u\|_{H^{2-\kappa}}\le R}\biggl\{ \|R_{k,N}\circ F'(u)\circ R_{k,N}v - a(u) R_{k,N}v\|_H \biggr\}\le \delta \|v\|_H,
\end{equation}
%$$
for some scalar multiplier $a(u) = a_{N,k,\delta}(u)\in \R$ which is assumed to be bounded  Borel measurable as a function from $H$ to $\R$.
\end{definition}
In this case, although we do not know how to construct the IM for the initial problem \eqref{main_eq}, it is possible to {\it modify} this equation outside of the absorbing ball in such a way that the new equation will possess the IM. Then, the obtained IM will still be invariant with respect to the solution semigroup $S(t)$ of the initial equation \eqref{main_eq} at least in the neighborhood of the global attractor $\Cal A$ and therefore will contain all of its non-trivial dynamics. By this reason, the IM for the modified equation is often referred as the IM for the initial problem \eqref{main_eq}, see e.g., \cite{FST,tem} for more details.
\par
To be more precise, according to the dissipative estimate \eqref{diss} with $s=2$ together with the smoothing property \eqref{smooth}, the set
%$$
\begin{equation}
\Cal B_2:=\big\{u\in H^2,\ \ \|u\|_{H^2}\le 2R_*\big\}
\end{equation}
%$$
is an absorbing ball for the solution semigroup $S(t)$ associated with the Cahn-Hilliard equation. 
 Let us introduce, following \cite{mal-par}, the cut-off function $\varphi(\eta) \in C^{\infty}(\R)$ such that:
%$$
 \begin{equation}
 \varphi(\eta) = 1 ,\ \ \eta\le(2 R_*)^2 \ \text{ and }\varphi(\eta)=\frac{1}{2}, \ \ \eta \ge R_1^2,
 \end{equation}
%$$
 where $R_1 > 2R_*$ and :
 %$$
 \begin{equation}\label{phi_rest}
 \varphi'(\eta)\le 0 \ \ \text{ and }\ \ \frac{1}{2}\varphi(\eta) + \eta \varphi'(\eta)> 0 ,\  \eta \in \R.
 \end{equation}
%$$
 Thus, \eqref{phi_rest} gives us  the restriction
 %$$
 \begin{equation}
\varphi(\eta)\ge \frac{2 R_*}{\sqrt{\eta}},\ \  \eta \ge 16R_*
 \end{equation}
 %$$
 and, therefore, $R_1 \ge 4 R_*$.
\par
 Finally, for every $N \in \N$, we  introduce the following cut -off version of the problem \eqref{main_eq}:
%$$
 \begin{equation}\label{main_eq_cut}
 \pt u + A^2 u + A F(u) - A^2 P_N u + \varphi(\|A P_N u \|^2_H)A^2 P_N u = 0.
 \end{equation}
 %$$
 Then, on the one hand, by the construction of the cut-off function $\varphi(\eta)$, we see that equation \eqref{main_eq_cut} coincides with \eqref{main_eq} inside the absorbing ball $\Cal B_2$ and, on the other hand, the following key result holds.

 \begin{theorem}\label{Th5.main}
 Let the non-linearity $F$ satisfy the spatial averaging assumption for some $\kappa\in(0,2)$. Then, there exist infinitely many $N$s such that the strong cone condition is satisfied  for the modified equation \eqref{aux_eq} and, thus, for every such $N$, it possesses an N-dimensional Lipschitz continuous IM.
 \end{theorem}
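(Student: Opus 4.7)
The plan is to verify the strong cone condition (Definition~\ref{Def3.n-cone}) for the modified equation \eqref{main_eq_cut} and then invoke Theorem~\ref{Th3.main}. The idea is to reduce, by means of a priori bounds on solutions of the cut-off problem, to a situation where the computation of Theorem~\ref{th_manifold_ex} applies with the $H^{2-\kappa}$-conditional spatial averaging \eqref{5.spa} playing the role of the uniform assumption \eqref{est_middle_terms}.

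\emph{A priori bounds and choice of $N$.} Since the cut-off term is supported on $H_+$, projection onto $H_-$ gives exactly the $Q_N$-equation of the original problem \eqref{main_eq}, and \eqref{diss_Q_N} furnishes $\|Q_Nu(t)\|_{H^{2-\kappa}}\le R_\kappa$. The structural conditions \eqref{phi_rest} on $\varphi$ force the cut-off to supply extra dissipation in $H_+$ once $\|AP_Nu\|_H$ leaves its absorbing range, yielding $\|P_Nu(t)\|_{H^2}\le R_1$. Together, $\|u(t)\|_{H^{2-\kappa}}\le R:=R_1+R_\kappa$, and by Remark~\ref{Rem3.smooth} it suffices to verify the strong cone condition along trajectories obeying this bound. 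Applying Definition~\ref{def_spat_av} with this fixed $R$, with $\delta$ chosen small and $k$ chosen large, produces an infinite set of $N$s for which $\lambda_{N+1}-\lambda_N\ge\rho$, the averaging bound \eqref{5.spa} holds with scalar multiplier $a(u)$, and (since $\lambda_N\to\infty$) all the auxiliary thresholds in \eqref{assump} are met, the leading large-$N$ form \eqref{5.spa-inf} being satisfied by these choices of $\delta$ and $k$.

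\emph{Strong cone inequality.} For a trajectory $u(t)$ of \eqref{main_eq_cut}, linearization gives
\begin{equation*}
\partial_t v+A^2 v+AF'(u)v+(\varphi-1)A^2P_Nv+2\varphi'(AP_Nu,AP_Nv)_H\,A^2P_Nu=0,
\end{equation*}
with $\varphi=\varphi(\|AP_Nu\|^2)$. Pairing with $A^{-1}(v_--v_+)$ and repeating the computation \eqref{b_est_l(t)}--\eqref{5.est1} of Theorem~\ref{th_manifold_ex}, the cut-off contributes the extra term
\begin{equation*}
-(1-\varphi)(Av_+,v_+)_H+2\varphi'(AP_Nu,Av_+)_H(AP_Nu,v_+)_H
\end{equation*}
on the right-hand side of the cone identity. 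The first summand is non-positive since $\varphi\le 1$; the second, a rank-one form in $v_+$, is absorbed into the first by completing the square and using the sign condition $\tfrac12\varphi(\eta)+\eta\varphi'(\eta)>0$ from \eqref{phi_rest}. Consequently the remainder of the estimate of Theorem~\ref{th_manifold_ex} goes through verbatim and yields \eqref{3.dcone1} with $\alpha(u)=2\lambda_N\lambda_{N+1}-2\lambda_N a(u)\ge 2\lambda_N(\lambda_{N+1}-2L)>0$ and a strictly positive $\mu$. Theorem~\ref{Th3.main} then delivers the desired $N$-dimensional Lipschitz inertial manifold.

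\emph{The main obstacle} is the step just described: the rank-one perturbation $2\varphi'(AP_Nu,AP_Nv)A^2P_Nu$ is non-local in $v_+$ and a priori of size $\lambda_N R_1^2\|v_+\|_H^2$, exactly the scale of the decisive dissipative terms in the cone inequality, so it cannot be bounded by brute force. The structural conditions \eqref{phi_rest} are tailored to guarantee that this perturbation combines favorably with the extra dissipation $-(1-\varphi)A^2P_Nv$; establishing this cancellation uniformly in the infinite sequence of $N$s produced by Definition~\ref{def_spat_av} is the technical core of the argument.
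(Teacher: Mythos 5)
Your overall strategy (verify the strong cone condition \eqref{3.dcone1} for the linearization of \eqref{main_eq_cut} and invoke Theorem \ref{Th3.main}) is the right one, and your linearized equation and the identification of the cut-off contribution are correct. But there is a genuine gap at the very first step: the claimed a priori bound $\|P_Nu(t)\|_{H^2}\le R_1$ for all trajectories of the cut-off equation is false. The cut-off does not confine $\|AP_Nu\|_H$ to the ball of radius $R_1$; for $\|AP_Nu\|_H\ge R_1$ the equation merely becomes $\pt u_++\tfrac12A^2u_++P_NAF(u)=0$ on the low modes, which is dissipative but whose trajectories --- in particular the solutions of the boundary value problems \eqref{aux_eq} with arbitrary $u_0^+\in H_+$ that are used to build $\Phi$ --- can have $\|AP_Nu(t)\|_H$ arbitrarily large (and at best an $N$-dependent eventual bound, not $R_1$). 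Remark \ref{Rem2.smooth} only lets you assume \eqref{2.important}, i.e.\ a bound on the $Q_N$-component; nothing controls the $P_N$-component. Consequently your reduction to ``$\|u(t)\|_{H^{2-\kappa}}\le R$, hence \eqref{5.spa} applies'' breaks down exactly on the set of trajectories where the cut-off is active, and the strong cone condition is left unverified there.

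The paper closes this by a case distinction at each fixed time $t$. If $\|AP_Nu(t)\|_H\le R_1$, then indeed $\|u(t)\|_{H^{2-\kappa}}\le\lambda_1^{-\kappa/2}R_1+2R_\kappa=:R$ and the computation of Theorem \ref{th_manifold_ex} goes through, with the cut-off term controlled by Lemma \ref{lem_est_T(u)} --- that lemma, based on the reflection inequality \eqref{5.CS} and the conditions \eqref{phi_rest}, is precisely the ``completing the square'' step you flag as the main obstacle; it holds pointwise in $u$ and uniformly in $N$, so there is no uniformity issue once it is stated. If instead $\|AP_Nu(t)\|_H\ge R_1$, the spatial averaging hypothesis is unavailable, but then $\varphi\equiv\tfrac12$ near $\|AP_Nu(t)\|^2_H$, so $\varphi'=0$, $T'(u)w=\tfrac12Aw_+$, and the cut-off supplies the exact extra dissipation $-\tfrac12(Aw_+,w_+)$; combined with $\lambda_n^2\le\lambda_N\lambda_{N+1}$ for $n\le N$ this yields \eqref{3.dcone1} using only the global Lipschitz bound and $L<\tfrac14\lambda_{N+1}-\mu$, with no averaging needed. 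Without this second case your argument does not prove the theorem.
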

 \begin{proof} According to Theorem \ref{Th3.main}, we only need to verify the strong cone condition \eqref{3.dcone1} for the equation of variations associated with the modified equation \eqref{aux_eq}:
 %$$
 \begin{equation}\label{5.var}
 \pt w + A^2 w + A (F'(u(t))w) = A^2 P_N w - T'(u(t))A w,
\end{equation}
 %$$
 where $T(u):= \varphi (\|A P_N u\|^2_H)A P_N u$ and $u(t)$ is a solution of \eqref{aux_eq}.
 To this end, we need the following technical lemma originally proved in \cite{mal-par}.

 \begin{lemma}\label{lem_est_T(u)}
 Under the above assumptions the following estimate is valid:
 %$$
 \begin{equation}\label{est_T(u)}
 ( T'(u)v,v) \le \frac{1}{2} \lambda_N \|v\|^2_H + \frac{1}{2}(A v, v),\ \  \forall v \in H_+,\  \ u\in H.
 \end{equation}
 %$$
 \end{lemma}
 \begin{proof} For the sake of completeness we provide the simplified proof of the lemma following to \cite{Zel}.
 It is based on the following inequality:
%$$
 \begin{equation}\label{5.CS}
 2(v,y)(w,y) \ge \|y\|_H^2 ((v,w) - \|v\|_H \|w\|_H)
 \end{equation}
 %$$
 for any 3 vectors $v,w,y\in H$. To verify it, we rewrite \eqref{5.CS} in the equivalent form
 $$
 \(v-2\frac{(v,y)}{\|y\|^2_H}y,w\)\le\|v\|_H\|w\|_H
 $$
 and this follows from the Cauchy-Schwartz inequality and the fact that the map $v\to v-2\frac{(v,y)}{\|y\|^2_H}y$ is a reflection with respect to the plane orthogonal to $y$ and, thus, is an isometry.
 \par
 Using now \eqref{5.CS} and inequalities \eqref{phi_rest}, we have that, for any $v \in H_+$,
 %$$
 \begin{multline}
 (T'(u)v,v)= 2 \varphi'(\|Au_+\|^2_H)(A u_+, A v)(A u_+, v) + \varphi(\|A u_+\|^2_H)(A v,v)  \le \\ \le - \varphi'(\|Au_+\|^2_H) \|Au_+\|^2_H (\|Av\|_H \|v\|_H - (A v, v)) + \varphi(\|Au_+\|^2_H) (A v, v) \le \\ \le \frac{1}{2} \varphi(\|Au_+\|^2_H) \lambda_N \|v\|^2_H + \frac{1}{2} \varphi(\|Au_+\|^2_H)(A v, v)\le \frac{1}{2} (\lambda_N \|v\|^2_H + (Av,v))
 \end{multline}
 %$$
 and the lemma is proved.
 \end{proof}

We are now ready to complete the proof of the theorem. Indeed, multiplying equation \eqref{5.var} by $w_--w_+$ acting as in the proof of Theorem \ref{th_manifold_ex} we come to the following equality:
%$$
 \begin{multline}\label{5.id}
 \frac{1}{2} \frac d{dt}V(w(t)) + \alpha V(w(t)) = ((\alpha A^{-1} - A)w_-, w_-) + ((A - \alpha A^{-1}) w_+, w_+) +\\+ (T'(u(t))w_+,w_+) - (A w_+,w_+) - (F'(u(t))w,w_--w_+),
 \end{multline}
%$$
where $\alpha=\lambda_N\lambda_{N+1}$.

  Then, using Lemma \eqref{lem_est_T(u)} and \eqref{3.dva-est}, we get:
%$$
\begin{multline}\label{same}
\frac{1}{2}\frac d{dt}V(w(t)) + \alpha V(w(t))\le -\frac{1}{2}(\lambda_{N+1} - \lambda_N )\|w_+\|^2_H +\frac{1}{2}((A - \alpha A^{-1})w_+,w_+)+\\ + ((\alpha A^{-1} - A)w_-, w_-) - (F'(u(t))w,w_--w_+)\le\\\le -\frac12(\lambda_{N+1}-\lambda_N)\|w\|^2_H+\frac{1}{2}((A - \alpha A^{-1})w_+,w_+)+\\ + \frac12((\alpha A^{-1} - A)w_-, w_-)- (F'(u(t))w,w_--w_+).
\end{multline}
%$$
 Fix an arbitrary point $t \ge 0$ and assume first that
%$$
 \begin{equation}\label{f_as}
  \|A P_N u(t)\|_H \le R_1,
 \end{equation}
 %$$
where $R_1$ is the same as in the definition of the cut-off function $\varphi$.
Thus, we see that the structure of \eqref{same} is exactly the same as the structure of \eqref{b_est_l(t)}. In addition, due to \eqref{diss_Q_N}, we may assume without loss of generality that
%$$
\begin{equation}\label{5.kappa}
\|Q_N u(t)\|_{H^{2-\kappa}}\le 2R_\kappa,
\end{equation}
%$$
see Remarks \ref{Rem2.smooth} and \ref{Rem3.smooth}. Therefore,
 assumption \eqref{f_as} implies that
%$$
\begin{equation}
\|u(t)\|_{H^{2-\kappa}}\le \lambda^{-\frac{\kappa}{2}}_1 \|P_N u(t)\|_{H^2} + \|Q_N u(t)\|_{H^{2 - \kappa}} \le \lambda^{-\frac{\kappa}{2}}_1 R_1 + 2 R_\kappa \le R,
\end{equation}
%$$
 where $R$ is independent of the choice of $N$. Hence, using spatial averaging assumption \eqref{5.spa} (with this value of the parameter $R$, sufficiently small $\delta$ and sufficiently large $k$ in order to satisfy \eqref{5.spa-inf}) and repeating word by word the proof of Theorem \eqref{th_manifold_ex}, we conclude that there exist a sequence of $N$s such that
 %$$
 \begin{equation}\label{5.good}
 \frac{1}{2} \frac{d}{dt}V(w(t)) + \alpha(u(t)) V(w(t)) \le -\mu \|w(t)\|^2_H,
 \end{equation}
 %$$
 where $\alpha(u):=\lambda_N\lambda_{N+1}-\lambda_N a(u)$ and $\mu>0$ is independent of $u$, $N$ and $w$. Thus, we have verified the strong cone condition \eqref{3.dcone1} in the case when \eqref{f_as} is satisfied.
 \par
 Let us now consider the opposite case
 %$$
 \begin{equation}\label{5.big}
  \|A P_N u(t)\|_H \ge R_1.
 \end{equation}
 %$$
 The situation here is  much simpler. Indeed, instead of estimate \eqref{est_T(u)}, we may use better identity $(T'(u(t))w,w) = \frac{1}{2}(Aw_+,w_+)$.  Then, using the Lipschitz continuity of $F$ and the fact that both $(\alpha A^{-1} - A)Q_N$ and $(A - \alpha A^{-1})P_N$ are negatively definite (see  estimates \eqref{3.one-est} and \eqref{3.dva-est}), we transform identity \eqref{5.id} as follows:
 %$$
 \begin{multline}
 \frac{1}{2} \frac{d}{dt} V(w(t)) + \alpha V(w(t)) \le\\\le ((\alpha A^{-1} - A)w_-, w_-) + ((A - \alpha A^{-1}) w_+, w_+)+L\|w\|^2_H-\frac12(Aw_+,w_+)\le\\\le
 L\|w\|^2_H-\frac12\alpha\|w_+\|^2_{H^{-1}}+((\alpha A^{-1} - A)w_-, w_-)\le\\\le
 L\|w\|^2_H+\frac14\alpha\(\|w_-\|^2_{H^{-1}}-\|w_+\|^2_{H^{-1}}\)-\frac14\lambda_{N+1}\|w_+\|^2_H-\frac14\|w_-\|_{H^1}^2\le\\\le
  \frac{1}{4} \alpha V(w(t)) + \(L-\frac{1}{4} \lambda_{N+1}\)\|w\|^2_H.
\end{multline}
%$$
Thus, if $ L < \frac{1}{4} \lambda_{N+1}-\mu$ ($\mu$ is the same as in \eqref{5.good}), we end up with estimate \eqref{5.good} with
 $\alpha(u)=\frac34\lambda_N\lambda_{N+1}$. Therefore
 the desired strong cone estimate \eqref{3.dcone1} is verified for the case when \eqref{5.big} is satisfied as well and the theorem is proved.
 \end{proof}

\begin{corollary}\label{Cor5.smooth} Let the assumptions of Theorem \ref{Th5.main} hold and let, in addition, the nonlinearity $F$ be smooth in the sense that assumption \eqref{4.3} hold for some $\kappa$ and $\delta>0$. Then, the inertial manifolds $\Cal M=\Cal M_N$ of the modified equations \eqref{main_eq_cut} are $C^{1+\eb}$-smooth for some $\eb=\eb_N>0$.
\end{corollary}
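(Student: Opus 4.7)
The strategy is to run the argument of Theorem~\ref{Th4.main} verbatim for the modified equation \eqref{main_eq_cut}, rewritten as
\begin{equation*}
\pt u + A^2 u + A\tilde F(u) = 0, \qquad \tilde F(u) := F(u) + \bigl(\varphi(\|AP_Nu\|^2_H)-1\bigr)AP_N u,
\end{equation*}
so that the cut-off is absorbed into the non-linearity. The added correction depends only on the finite-dimensional component $P_Nu$ and is $C^\infty$ in it, hence $\tilde F$ inherits the H\"older $C^1$ estimate \eqref{4.3} from $F$ (with the same $\delta$ and $\kappa$). Although $\tilde F$ is neither globally bounded nor globally Lipschitz on $H$, this is harmless: by Remarks~\ref{Rem2.smooth} and~\ref{Rem3.smooth} only trajectories satisfying \eqref{2.important} are ever tested, and the cut-off does not affect the $Q_N$-equation, so \eqref{diss_Q_N} continues to hold.

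Next I would assemble the inputs driving the smoothness argument. Theorem~\ref{Th5.main} supplies the strong cone property in the differential form -- in the guise of \eqref{5.good} -- for the equation of variations \eqref{5.var} of the modified equation; consequently, Lemma~\ref{Lem3.robust}, Corollary~\ref{Cor3.hyp} and Corollary~\ref{Cor3.non} all apply to its solutions. The parabolic smoothing property for the modified equation and its linearisation is standard, since the cut-off is a bounded finite-rank perturbation of the $P_N$-part: the estimate \eqref{smooth} and its linearised counterpart survive unchanged.

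With these tools at hand, I would reproduce the four steps of the proof of Theorem~\ref{Th4.main} one by one. Step~1 defines the candidate derivative $\Phi'(u_+^1)$ via the limit \eqref{29}, where $w_T$ now solves the boundary-value problem associated to \eqref{5.var}; existence of the limit follows from the normal-hyperbolicity bounds \eqref{11}--\eqref{12}. Step~2 compares two IM trajectories corresponding to $u_+^1$ and $u_+^2$ from a bounded ball in $H_+$: viewing the equation for $v=u^1-u^2$ as a non-homogeneous perturbation of \eqref{5.var} with source $(l_{u_1,u_2}-\tilde F'(u_1))v$, and using the H\"older bound \eqref{36} for $\tilde F$ (furnished by \eqref{4.3} plus the finite-dimensional $C^\infty$-smoothness of the cut-off correction), I would upgrade \eqref{34} to \eqref{38} on $[-T,0]$ with $T\sim\ln(1/\|w_+\|_H^{\delta})$. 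Step~3 promotes all $H^{-1}$-bounds to $H^{2-\kappa}$-bounds by parabolic smoothing, and Step~4 estimates $\theta=w-v$ through Corollary~\ref{Cor3.non}, arriving at the desired inequality \eqref{28} with some $\varepsilon=\varepsilon_N\in(0,\delta/2)$, exactly as in \eqref{42}--\eqref{43}.

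The main obstacle, and the only place where the situation differs from Theorem~\ref{Th4.main}, is the verification that the cut-off correction $(\varphi(\|AP_Nu\|^2)-1)AP_Nu$ does not spoil condition \eqref{4.3}. This is handled by the observation that for fixed $u_+^1,u_+^2$ in a bounded subset of $H_+$, the corresponding trajectories $u^i(t)$, $t\le0$, stay in a bounded subset of $H^{2-\kappa}$ uniformly in $T$: the $Q_N$-dissipativity \eqref{diss_Q_N} controls $\|Q_Nu^i(t)\|_{H^{2-\kappa}}$, while the backward Lipschitz estimate \eqref{est_tech} applied to the $P_N$-component controls $\|P_Nu^i(t)\|_H$. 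On such a bounded set the $C^\infty$ cut-off correction satisfies a locally Lipschitz bound on its derivative, which is strictly stronger than what \eqref{4.3} requires, so the remaining computation is a line-by-line reproduction of \eqref{37}--\eqref{43}.
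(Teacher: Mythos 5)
Your proposal is correct and follows essentially the same route as the paper, which disposes of the corollary in one sentence by invoking Theorem \ref{Th4.main} together with Theorem \ref{Th5.main} and Remark \ref{Rem4.good}. You supply considerably more detail than the paper does---in particular the rewriting of \eqref{main_eq_cut} with the cut-off absorbed into the nonlinearity and the check that the finite-rank correction $(\varphi(\|AP_Nu\|_H^2)-1)AP_Nu$ preserves condition \eqref{4.3} on the bounded sets where it is actually tested---but the underlying argument is identical.
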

Indeed, the statement of the corollary follows immediately from the proved theorem and Theorem \ref{Th4.main}, see also Remark \ref{Rem4.good}.

\section{Inertial manifolds for the classical Cahn-Hilliard equation}\label{s5}
In this concluding section, we apply the developed abstract theory to the classical Cahn-Hilliard equation \eqref{0.CH} in the 3D case endowed by periodic boundary conditions. First, we need to embed this equation into the functional model \eqref{main_eq}. To this end, keeping in mind the conservation law \eqref{0.int} and our agreement that $\<u(t)\>=0$, we introduce the space
%$$
\begin{equation}
H:=L^2(\Bbb T^3)\cap\{\<u\>=0\}
\end{equation}
%$$
and the operator $A=-\Delta_x$ with the domain $D(A)=H^2(\Bbb T^3)\cap\{\<u\>=0\}$. Then, $H^s=H^s(\Bbb T^3)\cap\{\<u\>=0\}$, $s\in\R$, where $H^s(\Bbb T^3)$ is a Soblolev space of $2\pi$ periodic functions from $\R^3$ to $\R$. Moreover, any function $u\in H$ can be splitted into the Fourier series
%$$
\begin{equation}\label{6.f}
u(x)=\sum_{l\in\Bbb Z^3,\, l\ne0} u_l e^{i l.x},\ \ u_l=\frac1{(2\pi)^3}\int_{\Bbb T^3} u(x)e^{i l.x}.
\end{equation}
%$$
Here and below $l.x:=\sum_{i=1}^3l_ix_i$ is a usual inner product in $R^3$ and $|l|^2:=l.l$. The eigenvalues of the operator $A$ are naturally parametrised by the points of the lattice $l\in\Bbb Z^3\backslash\{0\}$, i.e.,
%$$
\begin{equation}\label{6.e}
A e_l=\lambda_l e_l,\ \ e_l=e^{i l.x},\ \ \lambda_l=|l|^2,\ \ l\in\Bbb Z^3\backslash\{0\}.
\end{equation}
%$$
Thus, due to the Parseval equality, the norm in the space $H^s$, $s\in\R$ is given by
%$$
\begin{equation}\label{6.n}
\|u\|_{H^s}^2=\sum_{l\in\Bbb Z^3,\, l\ne0}|l|^{2s} |u_l|^2,\ \ u(x)=\sum_{l\in\Bbb Z^3,\, l\ne0} u_l e^{i l.x}.
\end{equation}
%$$
We now return to equation \eqref{0.CH}. We assume that the non-linearity $f\in C^3(\R,\R)$ and is globally bounded together with its first derivative:
%$$
\begin{equation}\label{6.b}
1. \ |f(u)|+|f''(u)|\le K,\ \ 2.\ \ |f'(u)|\le L
\end{equation}
%$$
for all $u\in\R$. As we have already mentioned, \eqref{6.b} is not a big restriction since in the general case of  dissipative non-linearities $f$ (e.g., $f(u)=u-u^3$), we usually have an absorbing ball in $H^2\subset C(\Bbb T^3)$, making the proper cut-off of the non-linearity outside of the absorbing ball if necessary, we may assume without loss of generality that \eqref{6.b} is satisfied.
Finally, we introduce the non-linearity $F: H\to H$ via
%$$
\begin{equation}\label{6.non}
F(u)(x):=f(u(x))-\<f(u)\>,\ \ u\in H,
\end{equation}
%$$
where the last term is introduced in order to guarantee that $\<F(u)\>=0$ and, therefore, $F(u)\in H$.
\par
Then, it is immediate to see that equation \eqref{0.CH} is equivalent to equation \eqref{main_eq}, so the desired functional model is constructed. At the next step, we check that the assumptions stated above for the abstract equation \eqref{main_eq} are satisfied in this concrete case. First, from \eqref{6.non}, it is immediate to check that the non-linearity $F(u)$ is globally bounded and globally Lipschitz continuous in $H$. The next key proposition checks that the spatial averaging assumption is also satisfied.

\begin{proposition}\label{Prop6.av} Let the non-linearity $f\in C^3(\R,\R)$. Then, the spatial averaging assumption is satisfied for the operator $F$ for all $\kappa\in(0,\frac12)$ and
%$$
\begin{equation}
a(u):=\<f'(u)\>.
\end{equation}
%$$
\end{proposition}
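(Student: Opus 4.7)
The plan is to reduce the spatial averaging estimate to a Fourier-side computation on $\Bbb T^3$ coupled with a number-theoretic separation lemma for lattice points in thin spherical shells. I would first write $v=R_{k,N}v=\sum_{l\in B_N}v_l e^{i l.x}$ with $B_N:=\{l\in\Bbb Z^3\setminus\{0\}:|l|^2\in[\lambda_N-k,\lambda_N+k]\}$ and expand $f'(u)-\<f'(u)\>=\sum_{m\neq 0}c_m e^{i m.x}$. Since the constant mode lies outside the range of $R_{k,N}$, I would obtain
\begin{equation*}
R_{k,N}(F'(u)v)-a(u)R_{k,N}v=R_{k,N}\bigl((f'(u)-\<f'(u)\>)\,v\bigr),
\end{equation*}
whose Fourier coefficient at $l'\in B_N$ is $\sum_{l\in B_N,\ l\neq l'}c_{l'-l}v_l$. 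Only those frequency differences $m=l'-l$ contribute whose endpoints $l,l'$ both lie inside $B_N$.

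The key tool I would invoke next is a number-theoretic separation lemma, modelled on the one used by Mallet--Paret and Sell for scalar reaction--diffusion: for every $M>0$ and $k>0$ there exist infinitely many $N\in\Bbb N$ such that (i)~$\lambda_{N+1}-\lambda_N\ge\rho$ for some fixed $\rho>0$ independent of $M,k,\delta,R$, and (ii)~any two distinct $l,l'\in B_N$ satisfy $|l-l'|\ge M$. Part~(i) is obtained from the Gauss three-squares theorem, which furnishes arbitrarily long gaps in the spectrum of $A$; part~(ii) is the refined arithmetic statement ruling out short ``chord vectors'' inside the thin shell $B_N$. Once this lemma is granted, every relevant $m$ must satisfy $|m|\ge M$, and Young's/Schur's inequality for convolutions will yield
\begin{equation*}
\|R_{k,N}(F'(u)v)-a(u)R_{k,N}v\|_H\le\Bigl(\sum_{|m|\ge M}|c_m|\Bigr)\|v\|_H.
\end{equation*}

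To bound the remaining Fourier tail uniformly in $\|u\|_{H^{2-\kappa}}\le R$, I would exploit that for $\kappa\in(0,\tfrac{1}{2})$ the space $H^{2-\kappa}(\Bbb T^3)$ is a Banach algebra embedded in $L^\infty$; together with $f\in C^3$, a standard Moser-type composition estimate then gives $\|f'(u)\|_{H^{2-\kappa}}\le C(R)$. A Cauchy--Schwarz argument against the lattice sum $\sum_{|m|\ge M}|m|^{-2(2-\kappa)}$, convergent precisely because $2(2-\kappa)>3$, will then produce
\begin{equation*}
\sum_{|m|\ge M}|c_m|\le C(R)\,M^{-(1/2-\kappa)},
\end{equation*}
which becomes $\le\delta$ as soon as $M=M(\delta,R,\kappa)$ is chosen large enough. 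Applying the separation lemma with this $M$ then finishes the argument.

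The genuine difficulty is concentrated in the number-theoretic separation lemma; the Fourier/Sobolev manipulations around it are soft. Arranging \emph{simultaneously} a uniform spectral gap of size $\rho$ and pairwise separation by $M$ of all lattice points in the entire shell $B_N$ is the arithmetic core of the spatial averaging principle carried over from \cite{mal-par}, and it is on this step that the whole proof pivots.
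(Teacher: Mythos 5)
Your proposal is correct and follows essentially the same route as the paper: reduce to the scalar reaction--diffusion situation $R_{k,N}(f'(u)R_{k,N}v)$, invoke the Mallet--Paret--Sell lattice lemma ensuring that distinct points of the thin shell differ by a vector of length at least $M$ (so only the high-frequency Fourier modes of $f'(u)$ contribute), and kill that tail uniformly on bounded sets of $H^{2-\kappa}$ using the algebra property for $\kappa<\tfrac12$. The only genuine difference is cosmetic: you bound the tail by Cauchy--Schwarz on the $\ell^1$ sum of Fourier coefficients (giving $M^{-(1/2-\kappa)}$), whereas the paper interpolates $\|w_{>r}\|_{L^\infty}$ between $H$ and $H^{2-\kappa}$; both yield the required smallness. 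One factual slip should be corrected: the Gauss three-squares theorem does \emph{not} furnish arbitrarily long spectral gaps --- on the contrary, it shows gaps never exceed $3$, which is precisely why the spectral gap condition fails here; your claim (i) with a fixed $\rho$ is nevertheless trivially true with $\rho=1$ because the eigenvalues are distinct integers, which is exactly how the paper obtains \eqref{5.agap}.
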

\begin{proof} Note that $F'(u)v=f'(u)v-\<f'(u)v\>$, $u,v\in H$ and
%$$
\begin{equation}
R_{k,N}\circ F'(u)\circ R_{k,N}v=R_{k,N}\(f'(u)R_{k,N}v\),
\end{equation}
%$$
so the spatial averaging assumption for our case {\it coincides} with the analogous assumption for reaction-diffusion equations. By this reason, the proof of the proposition follows word by word to the one given in \cite{mal-par,Zel} for the case of reaction-diffusion equations. For the convenience of the reader, we sketch this proof below. It is based on the following non-trivial result from the number theory.

\begin{lemma}\label{Lem6.sp} Let
%$$
\begin{equation}
\Cal C^k_N:=\{l\in\Bbb Z^3\,:\, N-k\le|l|^2\le N+k\},\ \ \Cal B_r:=\{l\in\Bbb Z^3\,:\, |l|\le r\}.
\end{equation}
%$$
Then, for every $k>0$ and $r>0$, there exist infinitely many $N\in\Bbb N$ such that
%$$
\begin{equation}\label{6.spav}
\(\Cal C^k_{N}-\Cal C^k_N\)\cap \Cal B_r=\{0\}.
\end{equation}
%$$
\end{lemma}
The proof of this lemma is given in \cite{mal-par}.
\par
We are now ready to verify the spatial averaging principle for the non-linearity $f$. To this end, we first note that according to the Weyl asymptotics
$$
\lambda_N\sim CN^{2/3},
$$
so without loss of generality, we may replace the projector $R_{k,N}$ by the projector to the Fourier modes belonging to $\Cal C^k_N$ which (in slight abuse of notations), we also denote it by $R_{k,N}$, so we use below the definition
$$
(R_{k,N}v)(x):=\sum_{l\in\Cal C^k_N} v_l e^{il.x}.
$$
Denote $w(x):=f'(u(x))$. Then, the multiplication $w(x)v(x)$ is a convolution in Fourier modes
%$$
\begin{equation}
[wv]_m=\sum_{l\in\Bbb Z^3} w_{m-l}v_l
\end{equation}
%$$
and, due to condition \eqref{6.spav},
%$$
\begin{equation}
R_{k,N}\((w-\<w\>) R_{k,N}v\)=R_{k,N}\(w_{>r}R_{k,N}v\),
\end{equation}
%$$
where $w_{>r}(x):=\sum_{|l|>r}w_le^{il.x}$. Therefore,
%$$
\begin{equation}
\|R_{k,N}\((w-\<w\>) R_{k,N}v\)\|_H\le\|\(w_{>r}R_{k,N}v\)\|_H\le \|w_{>r}\|_{L^\infty}\|v\|_H.
\end{equation}
%$$
Furthermore, due to the interpolation, for $\kappa<1/2$,
$$
\|w_{>r}\|_{L^\infty}\le C\|w_{>r}\|_H^{1-\theta}\|w_{>r}\|_{H^{2-\kappa}}^\theta\le C r^{-(1-\theta)(2-\kappa)}\|w\|_{H^{2-\kappa}},
$$
where $\theta=\frac3{2(2-\kappa)}$. Finally, using that $H^{2-\kappa}$ is an algebra for $\kappa<\frac12$ and that $f'\in C^2$, we have %$$
\begin{equation}\label{6.est}
\|R_{k,N}\((w-\<w\>) R_{k,N}v\)\|_H\le C r^{-(1-\theta)(2-\kappa)}Q(\|u\|_{H^{2-\kappa}})\|v\|_H
\end{equation}
%$$
for some monotone increasing function $Q$. Thus, the right-hand side of \eqref{6.est} can be made arbitrarily small by increasing $r$, so estimate \eqref{5.spa} indeed holds with
$$
a(u)=\<w\>=\frac1{(2\pi)^3}\int_{\Bbb T^3}f'(u(x))\,dx.
$$
Since the eigenvalues of $A$ are integers, assumption \eqref{5.agap} also holds with $\rho=1$ and the nonlinearity $F(u)$ satisfies indeed the spatial averaging assumption. Thus, the proposition is proved.
\end{proof}
The next simple proposition shows that the map $F$ is smooth if $f$ is smooth.

\begin{proposition}\label{Prop6.sm} Let the nonlinear function $f\in C^2$ and satisfy \eqref{6.b}. Then, the non-linear operator $F(u)$ defined by \eqref{6.non} satisfies estimate \eqref{4.3} for any $\delta\in[0,1]$ and any $\kappa\in(0,\frac12)$.
\end{proposition}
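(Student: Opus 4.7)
The plan is to reduce everything to a pointwise Taylor estimate and then do a standard Sobolev embedding/interpolation argument. First, I would note that
$F(u_1)-F(u_2)-F'(u_1)(u_1-u_2)=\Bigl[f(u_1)-f(u_2)-f'(u_1)(u_1-u_2)\Bigr]-\bigl\langle \,\cdot\, \bigr\rangle$, where the bracket is the mean of the preceding square bracket. Since subtraction of the mean is a contraction in $L^2$, the $H$-norm of the left-hand side is at most the $L^2$-norm of the unshifted pointwise quantity. Then the integral form of the mean value theorem applied twice together with the assumption $|f''|\le K$ gives the pointwise bound $|f(u_1(x))-f(u_2(x))-f'(u_1(x))(u_1(x)-u_2(x))|\le \tfrac{K}{2}|u_1(x)-u_2(x)|^2$, so that $\|F(u_1)-F(u_2)-F'(u_1)(u_1-u_2)\|_H\le C\|u_1-u_2\|_{L^4}^2$.

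Next I would handle the $L^4$ norm by the 3D Sobolev embedding $H^{3/4}(\mathbb T^3)\hookrightarrow L^4(\mathbb T^3)$ combined with interpolation between $H$ and $H^{2-\kappa}$: for $\kappa\in(0,\tfrac12)$ one has $\|w\|_{H^{3/4}}\le C\|w\|_H^{1-\theta}\|w\|_{H^{2-\kappa}}^{\theta}$ with $\theta=\tfrac{3/4}{2-\kappa}<\tfrac12$. Squaring and using the Poincaré-type inequality $\|w\|_H\le C\|w\|_{H^{2-\kappa}}$ (valid on $H$ because of the zero-mean constraint) to trade the extra factor $\|u_1-u_2\|_H^{1-2\theta}$ for $C\|u_1-u_2\|_{H^{2-\kappa}}^{1-2\theta}$ yields precisely
\begin{equation*}
\|F(u_1)-F(u_2)-F'(u_1)(u_1-u_2)\|_H\le C\|u_1-u_2\|_{H^{2-\kappa}}\|u_1-u_2\|_H,
\end{equation*}
i.e.\ the estimate \eqref{4.3} in the endpoint case $\delta=1$.

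Finally, for arbitrary $\delta\in[0,1]$ I would use the trivial bound coming from $\|F'\|_{\mathcal L(H,H)}\le L$, namely
\begin{equation*}
\|F(u_1)-F(u_2)-F'(u_1)(u_1-u_2)\|_H\le 2L\|u_1-u_2\|_H,
\end{equation*}
which is the $\delta=0$ version of \eqref{4.3}. Raising the $\delta=1$ estimate to the power $\delta$ and the $\delta=0$ estimate to the power $1-\delta$ and multiplying (this is just a log-convexity/geometric mean argument) produces the claimed inequality for every intermediate $\delta$, with a constant depending on $\delta$ and $\kappa$ only.

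There is no real obstacle; the main technical moment is making sure the exponents from the interpolation $H\to H^{3/4}\to H^{2-\kappa}$ assemble into exactly one factor of $\|u_1-u_2\|_H$ times one factor of $\|u_1-u_2\|_{H^{2-\kappa}}$, which is why the Poincaré-type trade $\|u_1-u_2\|_H^{1-2\theta}\le C\|u_1-u_2\|_{H^{2-\kappa}}^{1-2\theta}$ (legal because $1-2\theta>0$ when $\kappa<\tfrac12$) is the decisive step; this is what forces the restriction $\kappa\in(0,\tfrac12)$ in the statement.
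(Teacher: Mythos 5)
Your proof is correct and follows the same overall skeleton as the paper's: reduce to the endpoint cases $\delta=0$ and $\delta=1$, handle $\delta=0$ by the global Lipschitz bound, handle $\delta=1$ by the second-order Taylor remainder with $|f''|\le K$, and interpolate geometrically for intermediate $\delta$. The one place where you genuinely diverge is in estimating the quadratic remainder $\|(u_1-u_2)^2\|_{L^2}$. The paper writes $\|(u_1-u_2)^2\|_{L^2}\le\|u_1-u_2\|_{L^\infty}\|u_1-u_2\|_{L^2}$ and then invokes the single embedding $H^{2-\kappa}(\mathbb{T}^3)\subset C(\mathbb{T}^3)$, which holds precisely because $2-\kappa>3/2$; this is shorter and lands directly on \eqref{4.3}. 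You instead write $\|(u_1-u_2)^2\|_{L^2}=\|u_1-u_2\|_{L^4}^2$ and assemble the exponents via the Sobolev embedding $H^{3/4}(\mathbb{T}^3)\hookrightarrow L^4(\mathbb{T}^3)$, the interpolation inequality between $H$ and $H^{2-\kappa}$, and the Poincar\'e-type trade $\|w\|_H\le C\|w\|_{H^{2-\kappa}}$ (legitimate here since $\lambda_1>0$ on the zero-mean subspace). Your bookkeeping is right: $\theta=\tfrac{3/4}{2-\kappa}<\tfrac12$ exactly when $\kappa<\tfrac12$, so $1-2\theta>0$ and the trade goes in the allowed direction, recovering the same restriction on $\kappa$ that the paper gets from the $L^\infty$ embedding. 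The two routes are equivalent in strength; the paper's is more economical, while yours makes explicit that only an $L^4$ (rather than $L^\infty$) control of $u_1-u_2$ is actually needed at that step, the full strength of $\kappa<\tfrac12$ entering only through the exponent arithmetic. Your remark that subtracting the mean is an orthogonal projection, hence a contraction on $L^2$, also disposes of the $\langle f(u)\rangle$ term slightly more cleanly than the paper's ``can be obtained analogously.''
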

\begin{proof} We first note that it is sufficient to verify \eqref{4.3} for $\delta=0$ and $\delta=1$ only. For simplicity, we verify estimate \eqref{4.3} for the first term $f(u(x))$ in the definition \eqref{6.non} of the nonlinearity $F(u)$ only. The estimate for the remaining term $\<f(u)\>$ can be obtained analogously. Let first $\delta=0$ and $u_1,u_2\in H$. Then, since $f$ is globally Lipschitz continuous,
$$
\|f(u_1)-f(u_2)-f'(u_1)(u_1-u_2)\|_H\le\|f(u_1)-f(u_2)\|_H+\|f'(u_1)(u_1-u_2)\|_H\le 2L\|u_1-u_2\|_H
$$
and estimate \eqref{4.3} is verified for that case. Let now $\delta=1$ and $u_1,u_2\in H^{2-\kappa}$. Then, according to the integral mean value theorem
%$$
\begin{multline}
f(u_1)-f(u_2)-f'(u_1)(u_1-u_2)=\int_0^1[f'(u_1+(1-s)(u_1-u_2))-f'(u_1)]\,ds(u_1-u_2)=\\=
\int_0^1\int_0^1f''(s_1u_1+s_1(1-s)(u_1-u_2))\,ds\,ds_1(u_1-u_2)^2
\end{multline}
%$$
and, due to assumption \eqref{6.b} and the embedding $H^{2-\kappa}\subset C$,
%$$
\begin{multline}
\|f(u_1)-f(u_2)-f'(u_1)(u_1-u_2)\|_H\le K\|(u_1-u_2)(u_1-u_2)\|_H\le\\\le K\|u_1-u_2\|_{L^\infty}\|u_1-u_2\|_H\le CK\|u_1-u_2\|_{H^{2-\kappa}}\|u_1-u_2\|_H.
\end{multline}
%$$
Therefore, estimate \eqref{4.3} holds for this case as well and the proposition is proved.
\end{proof}
Thus, all abstract assumptions from the previous sections are verified and we have proved the following theorem which is the main result of the paper.
\begin{theorem} Let the non-linear function $f\in C^3(\R,\R)$ and satisfy assumptions \eqref{6.b}. Then, there exists an infinite sequence of $Ns$ such that the classical Cahn-Hilliard problem \eqref{0.CH} on a 3D torus $\Bbb T^3=[-\pi,\pi]^3$ possesses an $N$ dimensional inertial manifold containing the global attractor. Moreover, these inertial manifolds are $C^{1+\eb}$-smooth for some $\eb=\eb_N>0$.
\end{theorem}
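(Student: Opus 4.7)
The plan is to collect all the abstract machinery developed in the previous sections and verify that its hypotheses hold for the concrete Cahn-Hilliard equation on $\Bbb T^3$. First I would embed \eqref{0.CH} in the abstract form \eqref{main_eq} by taking $A=-\Delta_x$ on $H=L^2(\Bbb T^3)\cap\{\<u\>=0\}$, and defining the nonlinearity $F(u)(x)=f(u(x))-\<f(u)\>$ so that $\<F(u)\>\equiv 0$ (and hence $F:H\to H$). Since the standard dissipative estimates for Cahn-Hilliard guarantee an absorbing set in $H^2\subset C(\Bbb T^3)$, I would replace $f$ outside a ball of radius $2R_\ast$ by a $C^3$ cut-off that preserves \eqref{6.b}; this only alters the dynamics outside the absorbing ball, and therefore does not disturb the attractor nor the eventual inertial manifold.

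Next, I would check that the two key abstract hypotheses now hold for this concrete $F$. The spatial averaging assumption in the sense of Definition \ref{def_spat_av} is exactly the content of Proposition \ref{Prop6.av}: thanks to the number-theoretic Lemma \ref{Lem6.sp} about the lattice $\Bbb Z^3$, one finds infinitely many $N$ for which the annular set $\mathcal C^k_N$ has an arithmetic-difference ``hole'' around the origin, which forces $R_{k,N}\circ F'(u)\circ R_{k,N}$ to be close to multiplication by the scalar $a(u)=\<f'(u)\>$ in operator norm on $H$, with an error controlled by $\|u\|_{H^{2-\kappa}}$ for any $\kappa\in(0,1/2)$. The spectral-gap ingredient \eqref{5.agap} is automatic here because the eigenvalues of $-\Delta_x$ on $\Bbb T^3$ are integers, so $\rho=1$ works.

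Having verified spatial averaging, I would apply Theorem \ref{Th5.main} to the cut-off equation \eqref{main_eq_cut}: this yields, for infinitely many $N$, an $N$-dimensional Lipschitz inertial manifold $\mathcal M_N$ for the modified equation. Since the cut-off $\varphi$ is identically $1$ on the absorbing ball $\mathcal B_2$, the modified and original semigroups agree there, so $\mathcal M_N$ is invariant under the Cahn-Hilliard semigroup in a neighbourhood of the global attractor $\mathcal A$ and contains $\mathcal A$; the exponential tracking property from Definition \ref{Def1.IM} transfers to the original equation in the usual way. For the $C^{1+\eb}$ regularity, I would verify the abstract smoothness hypothesis \eqref{4.3} via Proposition \ref{Prop6.sm}, which uses only $f\in C^2$ with bounded $f''$ together with the algebra/embedding property of $H^{2-\kappa}$ for $\kappa<1/2$ in dimension $3$, and then invoke Corollary \ref{Cor5.smooth} to conclude $\mathcal M_N\in C^{1+\eb}$ for some $\eb=\eb_N>0$.

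The main obstacle in this program is not the concluding assembly above, which is essentially bookkeeping, but the verification of the spatial averaging principle: it is precisely the number-theoretic Lemma \ref{Lem6.sp} (existence of infinitely many radii $\sqrt{N}$ with arbitrarily ``isolated'' spherical annuli in $\Bbb Z^3$) that makes the whole argument work, and it is here that the dimension $3$ and the periodic boundary conditions enter essentially. Once that lemma is in hand, the Sobolev interpolation $\|w_{>r}\|_{L^\infty}\le C r^{-(1-\theta)(2-\kappa)}\|w\|_{H^{2-\kappa}}$ and the algebra property of $H^{2-\kappa}$ give the required smallness of the ``middle part'' in estimate \eqref{5.spa} uniformly on bounded sets of $H^{2-\kappa}$, and everything else slots into the abstract theorems.
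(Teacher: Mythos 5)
Your proposal follows exactly the paper's own route: embed \eqref{0.CH} into the abstract model \eqref{main_eq}, verify the spatial averaging assumption via Proposition \ref{Prop6.av} (resting on the number-theoretic Lemma \ref{Lem6.sp}), apply Theorem \ref{Th5.main} to the cut-off equation \eqref{main_eq_cut} for the Lipschitz IM, and combine Proposition \ref{Prop6.sm} with Corollary \ref{Cor5.smooth} for the $C^{1+\eb}$ regularity. This matches the paper's (one-line) proof, which cites precisely these ingredients, so no further comment is needed.
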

Indeed, the existence of IMs follows from Theorem \ref{Th5.main} and Proposition \ref{Prop6.av} and its smoothness follows from Theorem \ref{Cor5.smooth} and Proposition \ref{Prop6.sm}.

%%%%%%%%%%%%%%%%%%%%%%%%%%%%%%%%%%%%%%%%%%%%%%%%%%%%%%%%%%%%%%%%%%%%%

%%%%%%%%%%%%%%%%%%%%%%%%%%%%%%%%%%%%%%%%%%%%%%%%%%%%%%%%%%%%%%%%%%%%%

\end{document}